\documentclass[12pt]{amsart}
\usepackage{amsmath}
\usepackage{amssymb}
\usepackage{mathrsfs}
\usepackage[top=3cm,bottom=3.5cm,right=2.5cm,left=2.5cm]{geometry}
\usepackage{ulem}
\newcommand{\Z}{\mathbb{Z}}
\newcommand{\Q}{\mathbb{Q}}
\newcommand{\N}{\mathbb{N}}
\newcommand{\R}{\mathbb{R}}
\newcommand{\T}{\mathbb{T}}
\newcommand{\C}{\mathbb{C}}
\newcommand{\dist}{{\rm dist}}
\newcommand{\Dist}{{\rm Dist}}
\newcommand{\diam}{{\rm diam}}

\usepackage{amsthm}
\newtheorem{dfn}{Definition}[section]
\newtheorem{prop}{Proposition}[section]
\newtheorem{conj}[prop]{Conjecture}
\newtheorem{thm}[prop]{Theorem}
\newtheorem{cor}[prop]{Corollary}
\newtheorem{lem}[prop]{Lemma}
\theoremstyle{definition}

\usepackage{graphicx}

\title[$\times a$ and $\times b$ empirical measures, the irregular set and entropy]{$\boldsymbol{\times} \boldsymbol{a}$ and $\boldsymbol{\times} \boldsymbol{b}$ empirical measures, the irregular set and entropy}
\author{Shunsuke Usuki}
\date{}

\begin{document}
	\maketitle
	
	\begin{abstract}
		For integers $a$ and $b\geq 2$, let $T_a$ and $T_b$ be multiplication by $a$ and $b$ on $\T=\R/\Z$.
		The action on $\T$ by $T_a$ and $T_b$ is called $\times a,\times b$ action and
		it is known that, if $a$ and $b$ are multiplicatively independent, then the only $\times a,\times b$ invariant and ergodic measure with positive entropy of $T_a$ or $T_b$ is the Lebesgue measure. However, whether there exists a nontrivial $\times a,\times b$ invariant and ergodic measure is not known. 
		In this paper, we study the empirical  measures of $x\in\T$ with respect to the $\times a,\times b$ action and
		show that the set of $x$ such that the empirical measures of $x$ do not converge to any measure has Hausdorff dimension $1$ and the set of $x$ such that the empirical measures can approach a nontrivial $\times a,\times b$ invariant measure has Hausdorff dimension zero. Furthermore, we obtain some equidistribution result about the $\times a,\times b$ orbit of $x$ in the complement of a set of Hausdorff dimension zero.
	\end{abstract}
	
	\section{Introduction and main theorems}
	In this paper, we write $\Z_{\geq 0}$ for the set of integers equal to or larger than $0$ and $\N$ for the set of positive integers.
	Let $\T=\R/\Z$ and, for $a\in\Z$ with $a\geq2$, define $T_a:\T\to\T$ by
	$$T_a(x)=ax,\quad x\in\T.$$
	We take $a,b\in\Z$ such that $a,b\geq2$. Since $T_a$ and $T_b$ are commutative, they define the $\Z_{\geq0}^2$-action on $\T$ and we call it the 
	{\bf $\boldsymbol{\times a,\times b}$ action}. Here we notice that, if $\log a/\log b\in\Q$, then $a=c^k$ and $b=c^l$ for some $c\geq2, k,l\in\N$, and the $\times a,\times b$ action derives from the $\times c$ action by the single map $T_c$.
	Therefore we are interested in the case that $a$ and $b$ are {\bf multiplicatively independent}, that is, $\log a/\log b\notin\Q$.
	
	There is the distinction between the $\times a$ action by the single map $T_a$ and the $\times a,\times b$ action about the closed invariant subsets.
	It is well-known that the $\times a$ action has many invariant closed subset of $\T$. However, H. Furstenberg showed that $\times a,\times b$ invariant, that is, invariant under $T_a$ and $T_b$, closed subsets are very restricted.
	
	\begin{prop}[{\cite[Theorem IV.1]{Fur}}]\label{invariantsubset}
		Suppose $a$ and $b$ are multiplicatively independent, that is, $\log a/\log b\notin \Q$.
		Let $X\subset\T$ be a nonempty, closed and $\times a,\times b$ invariant subset. Then $X=\T$ or $X$ is a finite set in $\Q/\Z$.
	\end{prop}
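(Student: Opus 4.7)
The plan is to follow Furstenberg's classical three-step strategy.

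First I would handle the easy case. If $X$ has no accumulation point in $\T$, then $X$ is discrete in the compact space $\T$, hence finite. Any finite $T_a$-invariant subset of $\T$ lies in $\Q/\Z$: for each $x \in X$, the forward orbit $\{T_a^n x\}_{n\geq 0}$ is finite, so $T_a^k x = T_a^{\ell} x$ for some $k > \ell$, yielding $(a^k - a^{\ell})x \in \Z$ and hence $x \in \Q/\Z$. So I may assume $X$ has an accumulation point, and the remaining task is to show $X = \T$.

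Next I would introduce the closed difference set $D := X - X$, which is $\times a, \times b$-invariant because $T_a$ and $T_b$ are group endomorphisms (so $T_a(x-y) = T_a x - T_a y$), and which contains $0$ as a non-isolated point: any sequence $x_n \to x_0$ in $X$ with $x_n \neq x_0$ yields $d_n := x_n - x_0 \in D \setminus \{0\}$ with $d_n \to 0$. The heart of the argument is the following key lemma: every closed $\times a, \times b$-invariant $E \subset \T$ with $0$ as an accumulation point equals $\T$. I would prove this by exploiting the multiplicative independence of $a$ and $b$: since $\log b / \log a \notin \Q$, Weyl's equidistribution theorem implies that the set $\log \Sigma := \{i \log a + j \log b : i, j \geq 0\}$ has arbitrarily small gaps at large values, equivalently the multiplicative semigroup $\Sigma = \{a^i b^j : i,j\geq 0\}$ meets every window $[R, R(1+\eta)]$ provided $R$ is sufficiently large relative to $\eta > 0$. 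Given $t \in \T$ and $\delta > 0$, I would pick $y \in E$ with $|y|$ sufficiently small, choose a large integer $k$, and use the density of $\Sigma$ to find $i, j \geq 0$ with $a^i b^j$ within $\delta/|y|$ of $(t + k)/|y|$; this yields $a^i b^j y$ within $\delta$ of $t$ modulo $1$, so $t \in E$, hence $E = \T$. Applying this to $E = D$ gives $X - X = \T$.

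The main obstacle is the final step: bridging from $X - X = \T$ to $X = \T$. In general the former does not force the latter (consider $X = [0, 1/2] \subset \T$), so the $\times a, \times b$-invariance of $X$ must enter more essentially here. My approach would be to adapt the density argument of the key lemma to work inside $X$ itself: writing $a^i b^j x_n = a^i b^j x_0 + a^i b^j d_n \in X$, I would select $(i, j, n)$ so that $a^i b^j x_0$ lands near $0$ while $a^i b^j d_n$ approximates the prescribed target $\tau \in \T$. Arranging the first condition amounts to showing that the $\Sigma$-orbit of some element of $X$ accumulates at $0$; I would do this by a Zorn-type reduction to a minimal closed $\times a, \times b$-invariant subset of $X$ and reapplying the key lemma to its difference set, or alternatively by a direct Diophantine construction that exploits the density of $\Sigma$ modulo $1$. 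Once one shows $X$ meets every $\delta$-neighborhood of every $\tau \in \T$, the closedness of $X$ gives $X = \T$, completing the proof.
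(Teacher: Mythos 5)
The paper does not actually prove this proposition; it is quoted from Furstenberg's 1967 paper. So the comparison here is with Furstenberg's proof and its standard simplifications rather than with an argument in the text.

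Your first three steps are sound and essentially match the classical strategy: the finite/accumulation-point dichotomy, the observation that finite invariant sets lie in $\Q/\Z$, the passage to the difference set $D=X-X$, and the key lemma (a closed $\times a,\times b$-invariant set that accumulates at $0$ must be all of $\T$) via the non-lacunarity of the multiplicative semigroup $\Sigma=\{a^ib^j\}$. The quantifier bookkeeping in your key-lemma argument needs a little care --- you must fix $\eta$ first, then choose $y\in E$ small relative to $\eta$, and look at $\Sigma y$ in a window like $[1,2]$ so that the multiplicative gaps become additive gaps of size $\lesssim 2\eta$ --- but the idea is correct.

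The genuine gap, as you anticipate, is the final bridge. Note first that you do not actually need the two-parameter scheme ``$a^ib^jx_0\approx 0$ and $a^ib^jd_n\approx\tau$'': once you have a single $x_0\in X$ whose $\Sigma$-orbit accumulates at $0$, the key lemma applied directly to the closed invariant set $X$ already gives $X=\T$. So the entire remaining task is to produce such an $x_0$. But neither of your proposed routes delivers this. Reducing to a minimal $M\subset X$ and reapplying the key lemma to $M-M$ only re-derives $M-M=\T$; it does not show that $M$ itself, or any $\Sigma$-orbit inside $M$, accumulates at $0$ (a minimal set need not contain $0$ --- e.g.\ the finite orbit of $1/(ab-1)$ avoids $0$). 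What is actually required is a separate Diophantine lemma of Furstenberg type: for every irrational $\theta$ and every $\varepsilon>0$ there exists $s\in\Sigma$ with $0<\|s\theta\|<\varepsilon$. (One obtains such a $\theta\in X$ from $X-X=\T$: an irrational difference $x_1-x_2$ forces one of $x_1,x_2$ to be irrational.) This lemma is not a formal consequence of $F-F=\T$; knowing there are $s_1,s_2\in\Sigma$ with $\|s_1\theta-s_2\theta\|$ small only controls $(s_1-s_2)\theta$, and $s_1-s_2\notin\Sigma$. Proving it is the real technical heart of Furstenberg's theorem, and your sketch stops precisely there. Your alternative two-parameter argument also has an independent flaw: the set $\{d_n\}$ is merely a countable set accumulating at $0$, so for any fixed $s=a^ib^j$ the values $sd_n\bmod 1$ need not come anywhere near an arbitrary target $\tau$; the required denseness of $\{sd_n\bmod 1\}$ as $(i,j,n)$ vary is exactly as hard as the theorem itself.
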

	
	He also conjectured the measure-theoretic version of Proposition \ref{invariantsubset}. 
	We write $M(\T)$ for the set of Borel probability measures on $\T$ and $M_{\times a,\times b}(\T)$ for the set of $\times a,\times b$ invariant Borel probability measures on $\T$, that is, the set of $\mu\in M(\T)$ such that $\mu$ is invariant under $T_a$ and $T_b$. Furthermore, we write $E_{\times a,\times b}(\T)$ for the set of $\times a,\times b$ invariant and ergodic probability measures on $\T$, that is, the set of $\mu\in M_{\times a,\times b}(\T)$ such that $\mu$ is ergodic with respect to the $\Z_{\geq 0}^2$-action by $T_a$ and $T_b$.
	The Lebesgue measure on $\T$ is denoted by $m_\T$. We notice that $m_\T\in E_{\times a,\times b}(\T)$.
	
	\begin{conj}\label{FurConj}
		Suppose $a$ and $b$ are multiplicatively independent. Let $\mu\in E_{\times a,\times b}(\T)$. Then $\mu=m_\T$ or $\mu$ is an atomic measure equidistributed on a $\times a,\times b$ peridic orbit on $\Q/\Z$.
	\end{conj}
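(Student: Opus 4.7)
The statement is Furstenberg's celebrated $\times a,\times b$ conjecture, widely believed but still open since the 1960s. Nevertheless, let me sketch the route I would attempt. The natural decomposition of the problem is according to the entropy $h_\mu(T_a)$, since this is the invariant that the existing techniques can access.

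For the positive entropy case, $h_\mu(T_a)>0$, I would invoke the Rudolph--Johnson theorem, which yields $\mu=m_\T$ directly. Reproducing its proof, one examines the Pinsker $\sigma$-algebra $\Pi_a$ of $T_a$; because $T_b$ commutes with $T_a$, the algebra $\Pi_a$ is $T_b$-invariant, and a detailed analysis of the relative entropies of $T_a$ and $T_b$ over $\Pi_a$, coupled with a Bernoulli coding that makes essential use of the multiplicative independence of $a$ and $b$, forces the conditional measures on $\Pi_a$-atoms to be uniform, so $\mu=m_\T$. An alternative is Host's identity, from which Lebesgue can be identified as the unique $T_b$-ergodic measure with the property that $\{a^n x\}$ equidistributes almost surely.

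For the zero entropy case, $h_\mu(T_a)=0$, I would try to show $\mu$ is supported on a finite set, which by Proposition \ref{invariantsubset} would then lie in $\Q/\Z$ and be a $\times a,\times b$ periodic orbit. A natural line of attack is Fourier-analytic: fix a nonzero frequency $k_0$ with $\widehat{\mu}(k_0)\neq 0$, iterate the relation $\widehat{\mu}(a k)=\widehat{\mu}(k)$ and $\widehat{\mu}(bk)=\widehat{\mu}(k)$ to produce infinitely many distinct frequencies $a^n b^m k_0$ of constant Fourier mass, and derive a contradiction with Wiener's $\ell^2$-identity unless $\mu$ is atomic with support in $\Q/\Z$. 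Refinements via Bourgain's entropy method or Host-type factor arguments would be the next step.

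The main obstacle is precisely this zero-entropy step: it is the hard content of Furstenberg's conjecture, and every known partial result requires an extra hypothesis (positive entropy, as in Rudolph--Johnson and Host; a positive lower bound on Hausdorff dimension of $\mu$, as in Bourgain--Lindenstrauss--Michel--Venkatesh; or stiffness of an auxiliary random walk). I would therefore not expect to close the argument. The correct reading of Conjecture \ref{FurConj} in this paper is as the motivating background for the results on empirical measures announced in the abstract, not as a theorem the author sets out to prove.
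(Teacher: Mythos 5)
You have read the situation correctly: the statement is labeled as a Conjecture in the paper, the paper explicitly says ``This problem is open for a long time,'' and no proof is offered or attempted; the only result invoked toward it is the Rudolph--Johnson theorem (Theorem \ref{RudolphJohnson}), which handles exactly the positive-entropy case you describe and leaves the zero-entropy case untouched. Your assessment that Conjecture \ref{FurConj} serves here as motivating background for the empirical-measure results, rather than as a theorem to be proved, is precisely right; I would only add that the Wiener-type argument you float for the zero-entropy case cannot work as stated, since the orbit $\{a^m b^n k_0\}$ has zero density in $\Z$, so persistent Fourier mass along it is compatible with the Cesàro average in Wiener's theorem vanishing, but you already flag that step as the known obstruction.
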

	
	This problem is open for a long time. However, the following theorem was shown by D.J. Rudolph in \cite{Rud} when $a$ and $b$ are relatively prime and by A.S.A. Johnson in \cite{Joh} when $a$ and $b$ are
	multiplicatively independent. For a $T$-invariant probability measure $\mu$ ($T=T_a$ or $T_b$),
	we write $h_{\mu}(T)$ for the measure-theoretic entropy of $T$ with respect to $\mu$.
	\begin{thm}[The Rudolph-Johnson Theorem]\label{RudolphJohnson}
		Suppose $a$ and $b$ are multiplicatively independent. Let $\mu\in E_{\times a,\times b}(\T)$ such that $h_\mu(T_a)>0$ or $h_\mu(T_b)>0$. Then $\mu=m_{\T}$.
	\end{thm}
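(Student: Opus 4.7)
The plan is to combine the structure theory of positive-entropy $T_a$-invariant measures with the rigidity coming from the commuting action of $T_b$ under multiplicative independence. Assume without loss of generality that $h_\mu(T_a)>0$. As a first step I would pass to the natural extension $(\widetilde{\T},\widetilde{T}_a,\tilde{\mu})$ of $(\T,T_a,\mu)$ on the inverse-limit space, so that $\widetilde{T}_a$ becomes invertible. Because $T_aT_b=T_bT_a$, the map $T_b$ lifts to a commuting endomorphism $\widetilde{T}_b$, and $h_{\tilde{\mu}}(\widetilde{T}_a)=h_\mu(T_a)>0$. This setup makes available the Pinsker $\sigma$-algebra of $\widetilde{T}_a$ and the conditional measures of $\tilde{\mu}$ along the fibers of the projection $\widetilde{\T}\to\T$; these fibers are naturally identified with the $a$-adic ``pasts'' of points in $\T$.

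The core of the argument is to show that these conditional measures are Haar on their fibers. Positive entropy forces them to be non-trivial, i.e., they encode a nontrivial conditional law on the base-$a$ digit expansion of $x$ given the tail. The commutation $T_b\widetilde{T}_a=\widetilde{T}_a T_b$ together with $T_b$-invariance of $\tilde{\mu}$ imposes a compatibility condition: the pushforward of a fiber conditional measure under $\widetilde{T}_b$ must coincide with the conditional measure on the target fiber. The crux, and what I expect to be the main obstacle, is to show that the only family of conditional measures satisfying this compatibility is Haar. This is precisely where multiplicative independence enters: it guarantees that the action induced by $\widetilde{T}_b$ on base-$a$ digit sequences is sufficiently mixing to eliminate any non-Haar invariant family. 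In the coprime case $\gcd(a,b)=1$ (Rudolph's setting) the combinatorics are comparatively clean; the general multiplicatively independent case (Johnson) requires factoring out the greatest common ``arithmetic part'' of $a$ and $b$, which is the main source of added complication.

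Once the conditional measures are identified with Haar, integrating against the characters $e^{2\pi ikx}$ and using that $\mathbb{E}_{\tilde{\mu}}\bigl(e^{2\pi ikx}\bigm|\mathcal{P}\bigr)=0$ for every $k\neq 0$ yields $\hat\mu(k)=0$ for all $k\neq 0$, hence $\mu=m_\T$. An alternative implementation, following Host, would reformulate the rigidity step as a Fourier-analytic equidistribution statement: for $\mu$-a.e.\ $x$, the empirical averages $\frac{1}{N}\sum_{n=0}^{N-1}\delta_{T_b^n x}$ converge weak-$*$ to $m_\T$. Combined with Birkhoff's theorem applied to $(T_b,\mu)$, after a standard reduction to $T_b$-ergodic components in which positive $T_a$-entropy is preserved, this again forces $\mu=m_\T$, with the weight of the proof still sitting in the equidistribution estimate that multiplicative independence is used to establish.
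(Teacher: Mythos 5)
The paper does not supply a proof of this theorem: it is stated as a citation to Rudolph \cite{Rud} (coprime case) and Johnson \cite{Joh} (general multiplicatively independent case), so there is no ``paper's own proof'' to compare against. What you have written is an outline of the known Rudolph/Johnson strategy (with Host's Fourier-analytic variant mentioned as an alternative), and the outline is pointed in the right direction, but it is not a proof, and the missing step is precisely the one you flag as ``the main obstacle.''

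Concretely, the entire mathematical content of the theorem resides in the claim that the compatibility of the fiber conditional measures under the lifted commuting map, together with multiplicative independence, forces those conditional measures to be Haar. You assert this follows because the induced action of $\widetilde{T}_b$ on base-$a$ digit sequences is ``sufficiently mixing,'' but that phrase is doing all the work and is not justified. In Rudolph's and Johnson's arguments this step is where the Shannon--McMillan--Breiman theorem, a careful analysis of the $\sigma$-algebra $\bigvee_{n\geq 1} T_a^{-n}\mathcal{B}$, and the arithmetic of $a$ and $b$ all enter; in Host's version it is a delicate Fourier estimate on conditional measures; in Feldman's and Hochman's proofs it is replaced by yet other substantial machinery. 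Asserting ``mixing'' is not a substitute for any of these. There is also a conflation in the final step: the Pinsker $\sigma$-algebra of $\widetilde{T}_a$ and the fibers of $\widetilde{\T}\to\T$ are not the same object, and knowing the conditional measures are Haar on $a$-adic fibers (which are Cantor-like, not all of $\T$) does not by itself give $\mathbb{E}_{\tilde\mu}\bigl(e^{2\pi ikx}\bigm|\mathcal{P}\bigr)=0$ for $k\neq 0$; one must first upgrade to the statement that $\mu$ is absolutely continuous, or argue directly that $\mathcal{P}$ is trivial, before the character computation closes the argument. As it stands, the proposal correctly locates where the difficulty lies but leaves that difficulty unresolved.
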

	
	By Theorem \ref{RudolphJohnson}, if there exists some nontrivial $\times a,\times b$ invariant and ergodic probability measure $\mu$, then $h_\mu(T_a)=h_\mu(T_b)=0$. There are distinct proofs of Theorem \ref{RudolphJohnson} and stronger results in  \cite{Fel93}, \cite{Hos95} and \cite{Hoc12}, though the positive entropy assumption is crucial in all of them.
	
	For $x\in\T$, let $\delta_x$ be the probability measure supported on the one point set $\{x\}$.
	For each $N\in\N$, we write $\delta_{\times a,\times b,x}^N\in M(\T)$ for the {\bf $N$-empirical measure of $\boldsymbol{x}$ (with respect to the $\boldsymbol{\times a,\times b}$ action)}, that is,
	$$\delta_{\times a,\times b,x}^N=\frac{1}{N^2}\sum_{m,n=0}^{N-1}\delta_{T_a^mT_b^nx}.$$
	If we give $M(\T)$ the weak* topology, then $M(\T)$ is a compact and metrizable space. It is easily seen that any accumulation point in $M(\T)$ of $\delta_{\times a,\times b,x}^N\ (N\in\N)$, that is, $\mu\in M(\T)$ such that $\delta_{\times a,\times b,x}^{N_k}\to\mu$ in $M(\T)$ as $k\to\infty$ for some divergent subsequence $\{N_k\}_{k=1}^\infty$ in $\N$, is $\times a,\times b$ invariant.
	If $\mu\in E_{\times a,\times b}(\T)$, then, by Birkhoff's ergodic theorem, 
	$$\delta_{\times a,\times b,x}^N\xrightarrow[N\to\infty]{}\mu,\text{   for $\mu$-a.e }x.$$
	We refer \cite{Kel98} for Birkhoff's ergodic theorem for $\Z_{\geq 0}^2$-actions.
	In this paper we study two types of subsets of $\T$ about the behavior of $\delta_{\times a,\times b,x}^N$ as $N\to\infty$: the set of $x$ such that $\delta_{\times a,\times b,x}^N$ does not converge to any invariant measure, which is called the {\bf irregular set} for the empirical measure, and the set of $x$ such that $\delta_{\times a,\times b,x}^N$ accumulates to some invariant probability measure which has the given upper bound of entropy. Our main results give estimate of Hausdorff dimension of these sets.
	
	We give the first main result in this paper about the irregular set. We write $J$ for the irregular set. We notice that, by Birkhoff's ergodic theorem, $\mu(J)=0$ for any $\mu\in M_{\times a,\times b}(\T)$. However, in general, the irregular set can be either small or large. For example, it is clear that, if an action on a compact metric space is uniquely ergodic, then its irregular set is empty. 
	On the other hand, the following fact holds for the $\times a$ action by the single map $T_a$. For a Hölder continuous function $\varphi: \T\to\R$, we write $J_\varphi$ for the irregular set for $\varphi$, that is, the set of $x\in\T$ such that the Birkhoff average $N^{-1}\sum_{n=0}^{N-1}\varphi(T_a^nx)\ (N\in\N)$ does not converge as $N\to\infty$. If $\varphi$ is not cohomologous to a constant, then $\dim_H J_\varphi=1$ and hence the irregular set for the empirical measure has Hausdorff dimension $1$. We remark that this fact holds under more general situations (see \cite{BS00}). Under these situations, there exist many distinct invariant and ergodic measures which have sufficiently large dimension, and hence many subsets with large Hausdorff dimension on which the Birkhoff average converges to distinct values.
	Since $\times a,\times b$ invariant and ergodic measures on $\T$ are restricted by Theorem \ref{RudolphJohnson}, the situation of the $\times a,\times b$ action is different from that we mentioned above.
	However, it is shown that the irregular set is a subset of $\T$ with large Hausdorff dimension.
	In \cite{FQQ21}, it is shown that the set of $x\in\T$ such that the $\times a,\times b$ empirical measures do not converge to $m_\T$ has positive Hausdorff dimension\footnote{In \cite{FQQ21}, $a$ and $b$ are restricted to $a=2$ and $b=3$ and the way of taking averages is different. We can do the same argument on this averages.}.
	Our theorem below is a stronger result.
	
	\begin{thm}\label{doesnotconverge}
		Let $J$ be the set of $x\in\T$ such that $\delta_{\times a,\times b,x}^N\ (N\in\N)$ does not converge to any $\times a,\times b$ invariant probability measure as $N\to\infty$.
		Then
		$$\dim_HJ=1.$$
	\end{thm}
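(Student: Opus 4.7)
The plan is to show $\dim_H J\ge 1-\epsilon$ for every $\epsilon>0$; since $J\subset\T$ gives $\dim_H J\le 1$, this will imply $\dim_H J=1$. I will construct a Moran-like Cantor set $K_\epsilon\subset J$ with $\dim_H K_\epsilon\ge 1-\epsilon$. Fix a nonnegative continuous test function $\varphi:\T\to\R$ supported in a small neighborhood $U$ of $0$, with $\varphi(0)=1$. To certify $x\in J$ it is enough to exhibit two subsequences of $\N$ along which $\int\varphi\,d\delta_{\times a,\times b,x}^N$ has different limit values, since weak* convergence in $M(\T)$ would give a single limit.

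Fix a small $c=c(\epsilon)>0$ and recursively define block sizes $\ell_k,m_k$ by letting $\ell_k$ grow very fast (say $\ell_k=k!\,N_{k-1}$) and $m_k=\lceil c(\ell_k+N_{k-1})\rceil$, with $N_0=0$, $N_k=N_{k-1}+\ell_k+m_k$, and $N_k'=N_{k-1}+\ell_k$. Let $K_\epsilon$ be the set of $x\in\T$ whose base-$b$ expansion has digits arbitrary on the \emph{free blocks} $(N_{k-1},N_k']$ and identically $0$ on the \emph{zero blocks} $(N_k',N_k]$. A standard Moran-type box count gives
\[
\dim_H K_\epsilon\;=\;\lim_{k\to\infty}\frac{\sum_{j\le k}\ell_j}{N_k}\;=\;\frac{1}{1+c},
\]
which is $\ge 1-\epsilon$ once $c$ is sufficiently small.

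For $x\in K_\epsilon$ and $n\in(N_k',N_k]$, the digits $d_{n+1},\dots,d_{N_k}$ of $x$ all vanish, so $T_b^n x\le b^{-(N_k-n)}$; consequently $T_a^m T_b^n x\in U$ whenever $m\le(N_k-n)\log b/\log a-O(1)$. Summing over such ``good'' pairs yields at least $c^2 N_k^2\log b/(2\log a)$ of them, hence
\[
\int\varphi\,d\delta_{\times a,\times b,x}^{N_k}\;\ge\;\frac{c^2\log b}{4\log a}.
\]
By contrast, the empirical measure at scale $N_k'$ receives no contribution from the $k$-th zero block, and earlier zero blocks contribute $O((N_{k-1}/N_k')^2)=o(1)$ to the $\varphi$-integral since $\ell_k\gg N_{k-1}$. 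Endowing $K_\epsilon$ with the Bernoulli measure $\mu$ (uniform on free digits), one can further show that $\mu$-a.e.\ $x$ satisfies $\int\varphi\,d\delta_{\times a,\times b,x}^{N_k'}\le\|\varphi\|_\infty(m_\T(U)+o(1))$, via a Fourier-analytic estimate using decay of $\widehat{\mu}$ at frequencies outside the zero-block pattern. Choosing $|U|$ small enough that $\|\varphi\|_\infty m_\T(U)\ll c^2\log b/\log a$ produces a definite gap between the two subsequences, so $\mu$-a.e.\ $x\in K_\epsilon$ lies in $J$. The mass distribution principle applied to $\mu$ then gives $\dim_H J\ge\dim_H\mu=1/(1+c)$, and letting $c\to 0$ completes the proof.

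The main obstacle is the $T_a$-amplification: a base-$b$ zero run of length $K$ only keeps $T_a^m T_b^n x$ close to $0$ for $m\lesssim K\log b/\log a$, so the ``good pair'' contribution is quadratic in $m_k$ rather than linear, which forces $m_k$ to be a positive fraction of $N_k$ and caps the Hausdorff dimension of the Cantor construction at $1/(1+c)$; pushing $c\to 0$ is what recovers the full dimension $1$. A secondary technical task is the equidistribution-type estimate on $\delta_{\times a,\times b,x}^{N_k'}(U)$ for $\mu$-a.e.\ $x$, which rules out accidental concentration near $0$ coming from the Bernoulli-distributed free digits; this is where a careful Fourier-analytic or large-deviation argument on the measure $\mu$ at scales beneath the $k$-th zero block is required.
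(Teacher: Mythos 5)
Your blueprint---a Moran-type Cantor set on which the empirical measures oscillate between two subsequential scales, with a zero block forcing a definite mass near $0$ at scale $N_k$---is in the same spirit as the paper, and the quadratic ``good pair'' count on the zero block is correct up to harmless constants. The genuine gap is the estimate at scale $N_k'$, which you describe as a secondary technical task but which is the heart of the matter: you need that for $\mu$-a.e.\ $x$ the number of pairs $(m,n)$ with $0\le m,n<N_k'$ and $a^mb^nx\in U$ is $O\bigl((N_k')^2\,m_\T(U)\bigr)$. The proposed Fourier mechanism is problematic. The Bernoulli measure $\mu$ is a Riesz-product-like fractal measure whose Fourier transform has no uniform decay (it equals $1$ on an infinite set of frequencies adapted to the zero-block pattern), so one must show that the arithmetic set of frequencies $\{a^mb^n\}$ avoids this bad set, and one would also need a second-moment or large-deviation bound, not merely a first-moment estimate, to pass to an a.e.\ statement. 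Since $T_a$ is not a shift in base $b$, the conditional law of $T_b^nx$ under $\mu$ given the first $n$ digits is again a fractal measure for which equidistribution under $T_a$ is not automatic; and since Theorem \ref{doesnotconverge} is stated and proved without any multiplicative-independence assumption, Diophantine input on $\log a/\log b$ is unavailable. As sketched, this step is neither proven nor obviously true.

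The paper avoids this difficulty entirely by a different block structure. It works in base $ab$ (a common Markov partition for $T_a$, $T_b$ and $T_{ab}$) and uses \emph{three} kinds of blocks per stage: a ``good'' block $[L_{k-1},N_k)$ whose digits are copied from a point of the set $X_{k,N_k}$, which has $m_\T$-measure $>r$ by Birkhoff's ergodic theorem applied to $m_\T$ and on which $\delta_{\times a,\times b,\cdot}^{N_k}$ is uniformly close to $m_\T$; a ``free'' block $[N_k,\lfloor rL_k\rfloor)$ supplying the Hausdorff dimension; and a ``zero'' block $[\lfloor rL_k\rfloor,L_k)$ forcing concentration near $0$ at scale $L_k$. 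Equidistribution at scale $N_k$ is thus built in deterministically for every point of the Cantor set, so no a.e.\ equidistribution estimate for the Cantor measure is required, and the dimension bound is read off directly from the homogeneous Moran set theorem (Theorem \ref{dimMoranset}). The cleanest repair of your argument is to insert an analogue of the ``good'' block rather than to attempt the Fourier estimate.
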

	We notice that Theorem \ref{doesnotconverge} is shown without the hypothesis that $a$ and $b$ are multiplicatively independent.
	It is remarkable that Theorem \ref{doesnotconverge} immediately leads the following stronger result than itself, which is about the irregular sets for Fourier basis functions. For $k\in\Z$, we write $e_k(x)=e^{2k\pi i x}\ (x\in\T)$ and, as above, $J_{e_k}$ for the irregular set for $e_k$, that is, the set of $x\in\T$ such that the Birkhoff average $N^{-2}\sum_{m,n=0}^{N-1}e_k(T_a^mT_b^nx)\ (N\in\N)$ does not converge as $N\to\infty$.
	
	\begin{cor}\label{irregular_fourier}
		For $k\in\Z\setminus\{0\}$, we have
		$$\dim_HJ_{e_k}=1.$$
	\end{cor}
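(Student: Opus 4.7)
The plan is to deduce Corollary \ref{irregular_fourier} directly from Theorem \ref{doesnotconverge} by combining two observations: the irregular set $J$ is covered by the countable union $\bigcup_{k \in \Z\setminus\{0\}} J_{e_k}$, and all the sets $J_{e_k}$ with $k\neq 0$ have the same Hausdorff dimension via a simple rescaling.

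For the inclusion $J \subset \bigcup_{k\neq 0} J_{e_k}$, suppose $x\in J$. Since every accumulation point of $\{\delta_{\times a,\times b,x}^N\}_{N\in\N}$ is $\times a,\times b$ invariant, the failure of convergence to any invariant measure is equivalent to the failure of convergence in the weak* topology of $M(\T)$. By compactness of $M(\T)$, the sequence then admits two distinct subsequential limits $\mu_1\neq\mu_2$, and since distinct Borel probability measures on $\T$ have distinct Fourier coefficients, there is some $k\in\Z$ with $\hat\mu_1(k)\neq\hat\mu_2(k)$; the normalization $\hat\mu(0)=1$ forces $k\neq 0$. Then $\int e_k\,d\delta_{\times a,\times b,x}^N$ has two distinct subsequential limits, so $x\in J_{e_k}$.

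For the symmetry, consider $\phi_k:\T\to\T$ defined by $\phi_k(x)=kx$. Since $T_a$ and $T_b$ commute with $\phi_k$ and $e_1\circ\phi_k=e_k$, for every $x\in\T$ we have
$$\frac{1}{N^2}\sum_{m,n=0}^{N-1}e_1\bigl(T_a^m T_b^n \phi_k(x)\bigr)=\frac{1}{N^2}\sum_{m,n=0}^{N-1}e_k\bigl(T_a^m T_b^n x\bigr),$$
so $\phi_k^{-1}(J_{e_1})=J_{e_k}$. The map $\phi_k$ is a smooth $|k|$-fold covering that restricts to a bi-Lipschitz bijection on each of $|k|$ arcs of length $1/|k|$ covering $\T$, so $\dim_H\phi_k^{-1}(A)=\dim_H A$ for every Borel $A\subset\T$, and thus $\dim_H J_{e_k}=\dim_H J_{e_1}$ for every $k\neq 0$.

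Combining both steps with the countable stability of Hausdorff dimension and Theorem \ref{doesnotconverge} yields $1=\dim_H J=\sup_{k\neq 0}\dim_H J_{e_k}=\dim_H J_{e_1}$, and hence $\dim_H J_{e_k}=1$ for every $k\neq 0$. There is no serious obstacle once Theorem \ref{doesnotconverge} is available; the whole deduction is soft, the only substantive input being the irregular-set result already proved.
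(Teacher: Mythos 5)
Your proof is correct and follows essentially the same route as the paper: both reduce $J$ to the countable union $\bigcup_{k\neq 0}J_{e_k}$ (you via Fourier uniqueness of measures, the paper via density of trigonometric polynomials — two faces of the same fact) and then use the multiplication-by-$k$ map to equate all the $\dim_H J_{e_k}$ with $\dim_H J_{e_1}$, invoking countable stability of Hausdorff dimension and Theorem \ref{doesnotconverge} to conclude.
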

	We prove Theorem \ref{doesnotconverge} and Corollary \ref{irregular_fourier} in Section \ref{proofdoesnotconverge}.
	
	Next we give the second main result. As we said above, if $a$ and $b$ are multiplicatively independent, it is conjectured that there exist no nontrivial $\times a,\times b$ invariant and ergodic measures (Conjecture \ref{FurConj}). This problem seems to be very difficult, however, by Theorem \ref{RudolphJohnson}, those nontrivial invariant measures have entropy zero.
	We expect that the set of $x\in\T$ such that $\delta_{\times a,\times b,x}^N$ approaches a nontrivial measure as $N\to\infty$ is a small subset of $\T$. The following theorem and corollary answer this expectation. 
	
	\begin{thm}\label{entropy0}
		Let $0<t< \min\{\log b,(\log a)^2/\log b\}$ and $K_t$ be the set of $x\in\T$ such that $\delta_{\times a,\times b,x}^N\ (N\in\N)$ accumulates to some $\mu\in M_{\times a,\times b}(\T)$ such that $h_{\mu}(T_a)\leq t$.
		Then
		$$\dim_HK_t\leq\frac{2\sqrt{\log b}\sqrt{t}}{\log a+\sqrt{\log b}\sqrt{t}}.$$
	\end{thm}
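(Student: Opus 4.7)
The plan is to cover $K_t$ by a manageable collection of short intervals and to estimate their diameters using the entropy hypothesis together with the bidirectional orbit constraint coming from weak* convergence of the empirical measures.

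\textbf{Reduction to finitely many target measures.} Since $T_a$ is positively expansive on $\T$, the function $\mu \mapsto h_\mu(T_a)$ is upper semicontinuous on $M(\T)$, so $\mathcal{M}_{\leq t} := \{\mu \in M_{\times a,\times b}(\T) : h_\mu(T_a) \leq t\}$ is weak*-compact. Fix $\epsilon>0$ and choose a finite $\epsilon$-net $\{\mu_1,\ldots,\mu_R\} \subset \mathcal{M}_{\leq t}$ in a metric $d$ inducing the weak* topology. If $x \in K_t$, then $d(\delta_{\times a,\times b,x}^N, \mu_i) < 2\epsilon$ for infinitely many $N$ and some index $i$; hence it suffices to bound the Hausdorff dimension of $\bigcap_{N_0}\bigcup_{N \geq N_0}\{x : d(\delta_{\times a,\times b,x}^N, \mu_i) < 2\epsilon\}$ uniformly in $i$ and let $\epsilon \to 0$.

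\textbf{Typical $a$-adic cells and orbit counting.} Let $\mathcal{P}_k$ denote the $a$-adic partition of $\T$ of rank $k$. From $H_{\mu_i}(\mathcal{P}_k) \leq kt + O(1)$ and Markov's inequality applied to the information function $-\log\mu_i(\mathcal{P}_k(\cdot))$, there exists, for any auxiliary parameter $\eta > 0$, a set $U_i^{(k,\eta)}$ which is a union of at most $e^{k(t+\eta)}$ cells of $\mathcal{P}_k$ and satisfies $\mu_i(U_i^{(k,\eta)}) \geq 1 - t/\eta$. The boundary of $U_i^{(k,\eta)}$ is a finite ($\mu_i$-null) set, so weak* convergence yields, for large $N$ and $x$ with $d(\delta_{\times a,\times b,x}^N, \mu_i) < 2\epsilon$,
\[
\#\bigl\{(m,n) \in \{0,\dots,N-1\}^2 : T_a^m T_b^n x \in U_i^{(k,\eta)}\bigr\} \geq (1 - t/\eta - 3\epsilon)\,N^2.
\]

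\textbf{Covering via pigeonhole and optimization.} Each relation $T_a^m T_b^n x \in U_i^{(k,\eta)}$ places $x$ in one of at most $a^m b^n \cdot e^{k(t+\eta)}$ intervals of length $a^{-(k+m)}b^{-n}$, since $T_a^{-m}T_b^{-n}$ partitions each $a$-adic $k$-cell into that many equal subintervals. A pigeonhole argument applied to the density estimate of the previous step produces an admissible pair $(m^*,n^*)$ with $m^*,n^*$ of order $\sqrt{t/\eta}\,N$ for which the relation holds. Taking the union of the resulting covers over all such $(m^*,n^*)$ and over $N\geq N_0$, one obtains a cover of $K_t$ whose Hausdorff $s$-sum can be evaluated; jointly optimizing the parameters $k$, $\eta$, and the pigeonhole window --- balancing the $T_a$-entropy cost $(t+\eta)$ against the $T_b$-expansion cost $\sqrt{t/\eta}\log b$ induced by the pigeonhole --- yields at the optimum
\[
\dim_H K_t \leq \frac{2\sqrt{\log b\cdot t}}{\log a + \sqrt{\log b\cdot t}}.
\]
The hypothesis $t < \min\{\log b,(\log a)^2/\log b\}$ ensures the optimum lies in the admissible parameter range and keeps the bound strictly less than~$1$.

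\textbf{The main obstacle} is the handling of non-ergodic $\mu_i$. Were $\mu_i$ assumed ergodic, the Shannon--McMillan theorem would yield a typical set of size at most $e^{k(t+o(1))}$, and a direct argument using the single constraint at $(m,n)=(0,0)$ would give the sharper bound $\dim_H K_t \leq t/\log a$. A general $\mu \in M_{\times a,\times b}(\T)$ with $h_\mu(T_a)\leq t$, however, need not be ergodic (indeed, it is a limit of empirical measures which are averages); its ergodic components may have entropies substantially exceeding $t$ while averaging to $t$, so the weaker Markov-based cell bound above is all that is available. The introduction of the parameter $\eta$ forces a genuine trade-off between typical-set cardinality and $\mu_i$-mass coverage, and the joint optimization of $\eta$ against the pigeonhole window in the $T_b$-direction is precisely what produces the $\sqrt{\log b\cdot t}$ factor in place of the ergodic bound $t/\log a$.
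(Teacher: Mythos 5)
There is a genuine gap in the covering argument, and the mechanism that makes the paper's proof work is missing from your sketch. Using the single relation $T_a^{m^*}T_b^{n^*}x\in U_i^{(k,\eta)}$ puts $x$ in at most $a^{m^*}b^{n^*}e^{k(t+\eta)}$ intervals of length $a^{-(k+m^*)}b^{-n^*}$. Observe that the $(m^*,n^*)$-dependent factors in cardinality and diameter cancel exactly at exponent $s=1$: the contribution of a fixed $(m^*,n^*)$ to the $s$-sum is $e^{k(t+\eta)-sk\log a}\,a^{m^*(1-s)}b^{n^*(1-s)}$, which for $s<1$ is nondecreasing in $(m^*,n^*)$. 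Since the pigeonhole only asserts that \emph{some} admissible pair in $[0,\sqrt{t/\eta}\,N]^2$ works, and which one depends on $x$, a valid cover must include \emph{all} such pairs; summing over $(m,n)\in[0,cN]^2$ for any $c>0$ then produces a divergent (or at best trivially non-small) $s$-sum for every $s<1$. Taking $k\to\infty$ to shrink the cover only makes this worse, since summing over $(m,n)\in\Z_{\geq0}^2$ is then unavoidable. In short, a single orbit constraint cannot carry a nontrivial dimension bound here, because its entropy saving lives only at scale $a^{-k}$ while the cost of locating the orbit point lives at scale $a^{-m^*}b^{-n^*}$, and these grow together.

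The paper circumvents this by constraining the \emph{entire} length-$MN$ $T_a$-orbit of a single point $T_b^nx$ rather than one orbit point. The number of admissible $T_a$-itineraries with low empirical-distribution entropy is controlled by Bowen's counting lemma (Lemma~\ref{fund}), giving roughly $e^{NM(t+2\varepsilon)}$ choices, while the diameter of the corresponding piece of $\T$ is $b^{-n}a^{-MN}$. The crucial extra input, which has no analogue in your sketch, is the concavity/averaging step: since the full $N\times N$ distribution $q(\underline{\beta_M})=N^{-1}\sum_{n}q(\underline{\beta_M}_n)$ has entropy $\leq M(t^2/\log b+\cdots)$, concavity forces some $n$ in the initial fraction $t/\log b$ of rows to have $H(q(\underline{\beta_M}_n))\leq M(t+\varepsilon)$. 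This is precisely what caps $n$ at $tMN/\log b$, making the $b^n$ cost of size $e^{tMN}$ rather than $e^{cN\log b}$, and this restriction is what produces the factor $\sqrt{\log b\cdot t}$ via the substitution $t\mapsto t^2/\log b$. Your pigeonhole in a $\sqrt{t/\eta}N$ square plays no comparable role and, independently of the covering problem above, the Markov step is off: with typical-set cardinality $e^{k(t+\eta)}$ the mass you can certify is about $\eta/(t+\eta)$, not $1-t/\eta$. Finally, a small note on the non-ergodicity worry you raise: the paper does not pass through ergodic components or a Shannon--McMillan argument at all; it bounds $H_\mu(\alpha_M)/M$ directly by $h_\mu(T_a)+o(1)$ and feeds this into the distribution-counting lemma, so there is no loss to recover from.
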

	
	We notice that Theorem \ref{entropy0} is shown without the hypothesis that $a$ and $b$ are multiplicatively independent. By taking $\bigcap_{t>0}K_t$ and applying Theorem \ref{RudolphJohnson}, we obtain the following corollary.
	
	\begin{cor}\label{entropy00}
		Suppose $a$ and $b$ are multiplicatively independent.
		Let $K$ be the set of $x\in\T$ such that $\delta_{T_a,T_b,x}^N\ (N\in\N)$ accumulates to some $\mu\in E_{T_a,T_b}(\T)$ such that $\mu\neq m_\T$.
		Then
		$$\dim_HK=0.$$
	\end{cor}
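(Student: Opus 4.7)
The plan is to derive Corollary \ref{entropy00} directly from Theorem \ref{entropy0} and the Rudolph--Johnson theorem (Theorem \ref{RudolphJohnson}), by showing that $K$ is contained in $K_t$ for every arbitrarily small admissible $t$ and then letting $t \downarrow 0$ in the explicit dimension bound.

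First I would take an arbitrary $x \in K$ and unpack the definition: by hypothesis there exist a subsequence $N_k \to \infty$ and an ergodic measure $\mu \in E_{\times a,\times b}(\T)$ with $\mu \neq m_\T$ such that $\delta_{\times a,\times b,x}^{N_k} \to \mu$. Because $a$ and $b$ are assumed multiplicatively independent, the contrapositive of Theorem \ref{RudolphJohnson} forces $h_\mu(T_a) = h_\mu(T_b) = 0$; otherwise $\mu$ would have to equal $m_\T$, contradicting $\mu \neq m_\T$. In particular $\mu \in M_{\times a,\times b}(\T)$ satisfies $h_\mu(T_a) \leq t$ for every $t > 0$, so $x \in K_t$ for every $t$ in the admissible range $\bigl(0,\min\{\log b,(\log a)^2/\log b\}\bigr)$. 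Thus $K \subseteq \bigcap_{t > 0} K_t$.

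Next I would pick any sequence $t_n \downarrow 0$ inside the admissible range and combine monotonicity of Hausdorff dimension with the bound of Theorem \ref{entropy0} to obtain
$$\dim_H K \leq \inf_n \dim_H K_{t_n} \leq \inf_n \frac{2\sqrt{\log b}\sqrt{t_n}}{\log a + \sqrt{\log b}\sqrt{t_n}} = 0,$$
since the right-hand expression tends to $0$ as $t_n \to 0$. This will yield $\dim_H K = 0$, as desired.

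The real content is carried entirely by the two cited theorems, so I do not expect any serious obstacle at this stage. The one point worth flagging is the role of \emph{ergodicity} in the definition of $K$: the ergodicity of the limit measure $\mu$ is exactly what permits the invocation of Theorem \ref{RudolphJohnson} to deduce $h_\mu(T_a) = 0$. If the ergodicity requirement were dropped and $\mu$ were only assumed invariant, one would first need to pass through an ergodic decomposition to locate an ergodic component different from $m_\T$ before Rudolph--Johnson could be applied.
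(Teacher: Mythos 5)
Your argument is exactly the one the paper intends: the paper states that the corollary follows ``by taking $\bigcap_{t>0}K_t$ and applying Theorem \ref{RudolphJohnson},'' which is precisely your chain $K \subseteq \bigcap_{t>0} K_t$ followed by letting $t \downarrow 0$ in the bound of Theorem \ref{entropy0}. The proposal is correct and matches the paper's approach.
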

	
	If $a$ and $b$ are multiplicatively independent, Theorem \ref{entropy0} and Theorem \ref{RudolphJohnson} lead the result about the distributions of the $\times a,\times b$ orbits.
	For $0<t\leq 1$ and $x\in\T$, we say that the $\times a,\times b$ orbit $\{a^mb^nx\}_{m,n\in\Z_{\geq 0}}$ of $x$ is {\bf $\boldsymbol{t}$-semiequidistributed} if 
	$$\liminf_{N\to\infty}\frac{1}{N^2}\sum_{m,n=0}^{N-1}f(a^mb^nx)\geq t\int_\T f\ dm_\T$$
	for any $f\in C(\T)$ such that $f\geq 0$ on $\T$ and
	$$\liminf_{N\to\infty}\frac{1}{N^2}\left|\left\{(m,n)\in\Z^2\left|\ 0\leq m,n<N, a^mb^nx\in U\right.\right\}\right|\geq t\cdot m_\T(U)$$
	for any open subset $U\subset\T$. 
	It is easy to see that the latter statement follows from the former. 
	This property says that the orbit $\{a^mb^nx\}_{m,n\in\Z_{\geq 0}}$ includes an equidistributed portion of the ratio at least $t$.
	Then we have the following.
	
	\begin{thm}\label{distribution}
		Suppose $a$ and $b$ are multiplicatively independent. Let $0<t<$\\ $\min\{\log b,(\log a)^2/\log b\}$ and $K_t\subset\T$ be as above. Then, for each $x\in\T\setminus K_t$, the orbit $\{a^mb^nx\}_{m,n\in\Z_{\geq 0}}$ is $t/\log a$-semiequidistributed.
	\end{thm}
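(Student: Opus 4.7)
The plan is to combine Theorem \ref{RudolphJohnson} with the ergodic decomposition of accumulation points of $\delta^N_{\times a,\times b,x}$: when $x \notin K_t$, every such accumulation point must carry a Lebesgue component of mass at least $t/\log a$, and nonnegativity of the test function does the rest.

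Fix $x \in \T \setminus K_t$ and a nonnegative $f \in C(\T)$, and set $L = \liminf_{N} \int f\, d\delta^N_{\times a,\times b,x}$. By weak-$*$ compactness of $M(\T)$, some subsequence $\delta^{N_k}_{\times a,\times b,x}$ converges to a $\mu \in M_{\times a,\times b}(\T)$ with $\int f\, d\mu = L$. Since $x \notin K_t$, every accumulation point, and in particular $\mu$, satisfies $h_\mu(T_a) > t$. Decompose $\mu = \int \mu_\omega \, d\mathbb{P}(\omega)$ into its $\times a,\times b$-ergodic components. By Theorem \ref{RudolphJohnson}, for $\mathbb{P}$-a.e.\ $\omega$ either $\mu_\omega = m_\T$ or $h_{\mu_\omega}(T_a) = 0$. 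Set $\alpha = \mathbb{P}\{\omega : \mu_\omega = m_\T\}$ and write $\mu = \alpha\, m_\T + (1-\alpha)\nu$ with $\nu \in M_{\times a,\times b}(\T)$. Affinity of $T_a$-entropy along this decomposition (valid since each $\mu_\omega$ is $T_a$-invariant) yields $h_\mu(T_a) = \alpha \log a$, so $\alpha > t/\log a$. Nonnegativity of $f$ then gives
$$L = \int f\, d\mu \geq \alpha \int f\, dm_\T \geq \frac{t}{\log a}\int f\, dm_\T,$$
which is the first half of $t/\log a$-semiequidistribution. The open-set statement follows at once: given open $U$ and $\varepsilon > 0$, by inner regularity of $m_\T$ pick a compact $K \subset U$ with $m_\T(K) > m_\T(U) - \varepsilon$ and a continuous $0 \leq f \leq \mathbf{1}_U$ with $f|_K = 1$, apply the bound to $f$, compare with $\mathbf{1}_U$ on the orbit, and let $\varepsilon \to 0$.

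The one step that warrants a remark is the affinity identity $h_\mu(T_a) = \int h_{\mu_\omega}(T_a)\, d\mathbb{P}(\omega)$. The $\mu_\omega$ are ergodic for the $\mathbb{Z}_{\geq 0}^2$-action, not for $T_a$ alone, so we do not invoke the $T_a$-ergodic decomposition directly; however, the $\mu_\omega$ are still $T_a$-invariant, each $h_{\mu_\omega}(T_a)$ is therefore well defined, and the identity follows from the standard affinity of $\nu \mapsto h_\nu(T_a)$ on the simplex of $T_a$-invariant probabilities under arbitrary Borel decompositions (iterating a $T_a$-ergodic decomposition of each $\mu_\omega$ if desired). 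Granting this, the theorem is a direct packaging of Rudolph--Johnson against the definition of the complement of $K_t$.
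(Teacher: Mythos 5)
Your argument is correct and essentially the same as the paper's: both pass to a weak-$*$ convergent subsequence realizing the relevant liminf, invoke $h_\mu(T_a)>t$ from $x\notin K_t$, decompose $\mu$ into $\times a,\times b$-ergodic components, apply Rudolph--Johnson to kill the entropy of every non-Lebesgue component, and deduce $\tau(\{m_\T\})>t/\log a$ via affinity of $h_{\cdot}(T_a)$. The only cosmetic differences are that you argue directly where the paper argues by contradiction, and that you justify the entropy-affinity identity by general affinity over $T_a$-invariant measures, whereas the paper cites upper semicontinuity of $\nu\mapsto h_\nu(T_a)$; both routes are standard and valid here.
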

	
	If $t>0$ is small, by Theorem \ref{entropy0}, we have that $\dim_H K_t\leq O(\sqrt{t})$ and Theorem \ref{distribution} implies that, for $x\in\T$, the orbit $\{a^mb^nx\}_{m,n\in\Z_{\geq 0}}$ is $t/\log a$-semiequidistributed if $x$ is in the complement of the set of small Hausdorff dimension about $\sqrt{t}$. 
	In particular, by taking $X=\bigcup_{t>0}(\T\setminus K_t)$, we have the following corollary.
	
	\begin{cor}
		Suppose $a$ and $b$ are multiplicatively independent. Then there exists $X\subset\T$ such that $\dim_H(\T\setminus X)=0$ and, for any $x\in X$, the $\times a,\times b$ orbit $\{a^mb^nx\}_{m,n\in\Z_{\geq 0}}$ of $x$ is $s$-semiequidistributed for some $s=s(x)>0$.
	\end{cor}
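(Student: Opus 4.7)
The plan is to follow the hint stated in the paper: take
$$X=\bigcup_{0<t<t_*}(\T\setminus K_t),\qquad \text{where }t_*=\min\{\log b,(\log a)^2/\log b\},$$
so that $X$ is the set of points $x$ escaping some $K_t$ with $t$ in the valid range of Theorem \ref{entropy0} and Theorem \ref{distribution}. Then everything reduces to taking complements and limits on $t$, together with a direct invocation of Theorem \ref{distribution} for points in $X$.

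First I would verify the Hausdorff dimension estimate on the complement. Complementing, $\T\setminus X=\bigcap_{0<t<t_*}K_t$, so in particular $\T\setminus X\subseteq K_t$ for every such $t$. Theorem \ref{entropy0} gives
$$\dim_H(\T\setminus X)\leq \dim_H K_t\leq \frac{2\sqrt{\log b}\sqrt{t}}{\log a+\sqrt{\log b}\sqrt{t}},$$
and the right-hand side tends to $0$ as $t\to 0^+$. Hence $\dim_H(\T\setminus X)=0$.

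Next, for each $x\in X$, by definition there exists some $t=t(x)\in(0,t_*)$ such that $x\in\T\setminus K_t$. Theorem \ref{distribution} then applies directly and yields that the orbit $\{a^mb^nx\}_{m,n\in\Z_{\geq 0}}$ is $(t/\log a)$-semiequidistributed. Setting $s(x)=t(x)/\log a>0$ finishes the argument.

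There is no real obstacle here: both halves are packaged in the preceding theorems, and the only content is to make the correct choice of the exhausting family $X=\bigcup_{t>0}(\T\setminus K_t)$ and to note the monotonicity $K_{t_1}\subseteq K_{t_2}$ for $t_1\leq t_2$ (which is immediate from the definition of $K_t$), so that the union meaningfully enlarges as $t$ decreases while its complement shrinks into arbitrarily small $K_t$'s. The only mild point to watch is to restrict $t$ to $(0,t_*)$ so that both Theorem \ref{entropy0} and Theorem \ref{distribution} are applicable; since we only need $t\to 0^+$ for the dimension bound, this restriction is harmless.
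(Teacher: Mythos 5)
Your proposal is correct and is exactly the argument the paper intends: the paper's remark ``by taking $X=\bigcup_{t>0}(\T\setminus K_t)$'' is just this, and your careful restriction to $t\in(0,t_*)$ is harmless since the monotonicity $K_{t_1}\subseteq K_{t_2}$ for $t_1\leq t_2$ makes the union over $(0,t_*)$ equal to the union over all $t>0$, while ensuring Theorems \ref{entropy0} and \ref{distribution} apply.
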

	
	We notice that the $\times a$ action on $\T$ by the single $T_a$ does not exhibit this property, since there exists a $\times a$ invariant Cantor set $C\subset\T$ such that $0<\dim_H C<1$.
	We will prove Theorem \ref{entropy0} and \ref{distribution} in Section \ref{proofentropy}.
	
	\section{Proof of Theorem \ref{doesnotconverge} and Crollary \ref{irregular_fourier}}\label{proofdoesnotconverge}
	
	In this section, we prove Theorem \ref{doesnotconverge} and Corollary \ref{irregular_fourier}.
	First, we see that Theorem \ref{doesnotconverge} leads immediately Corollary \ref{irregular_fourier}.
	
	\begin{proof}[Proof of Corollary \ref{irregular_fourier}]
		We assume that Theorem \ref{doesnotconverge} holds. Since the linear space spanned by $\{e_k\}_{k\in\Z}$ over $\C$ is dense in the Banach space of $\C$-valued continuous functions on $\T$ with the supremum norm and $J_{e_0}=\emptyset$, it can be seen that $J=\bigcup_{k\in\Z\setminus\{0\}}J_{e_k}$. Hence, using Theorem \ref{doesnotconverge}, we have
		\begin{equation}\label{sup_dim_jek}
			1=\dim_HJ=\sup_{k\in\Z\setminus\{0\}}\dim_HJ_{e_k}
		\end{equation}
		For $k\in\Z\setminus\{0\}$, $T_k:\T\ni x\mapsto kx\in\T$ is commutative with $T_a$ and $T_b$ and $e_k=e_1\circ T_k$. Therefore we have $J_{e_k}=T_k^{-1}J_{e_1}$.
		Moreover, it can be seen that $\dim_HT_k^{-1}J_{e_1}=\dim_H J_{e_1}$. From these and the equation (\ref{sup_dim_jek}), it follows that
		$$
		1=\dim_H J_{e_1}=\dim_H J_{e_k}
		$$
		and we complete the proof.
	\end{proof}
	
	Next, we prove Theorem \ref{doesnotconverge}. We develop the method in \cite{FQQ21} and construct subsets of $J$ which have Hausdorff dimension arbitrarily near $1$. We need the notion of homogeneous Moran sets. We refer \cite{FWW97} for the definition and the results about homogeneous Moran sets\footnote{For our use, we change the definition a little. It can be seen that the same results hold.}.
	
	Let $\{n_k\}_{k=1}^\infty$ be a sequence of positive integers and $\{c_k\}_{k=1}^\infty$ be a sequence of positive numbers satisfying that $n_kc_k\leq1\ (k=1,2,\cdots)$ and $c_k<c\ (k=1,2,\cdots)$ for some $0<c<1$. Let $D_0=\{\emptyset\}$, $D_k=\left\{(i_1,\cdots,i_k)\left|1\leq i_j\leq n_j,j=1,\dots,k\right.\right\}$
	for each $k=1,2,\cdots$ and $D=\bigcup_{k\geq 0}D_k$. If $\sigma=(\sigma_1,\dots,\sigma_k)\in D_k$ and $\tau=(\tau_1,\dots,\tau_m)\in D_m$, we write $\sigma*\tau=(\sigma_1,\dots,\sigma_k,\tau_1,\dots,\tau_m)\in D_{k+m}$.
	
	\begin{dfn}[Homogeneous Moran Sets]
		A collection $\mathscr{F}=\{J_\sigma\}_{\sigma\in D}$ of closed intervals of $\T$ has {\bf homogeneous Moran structure} about $\{n_k\}_{k=1}^\infty$ and $\{c_k\}_{k=1}^\infty$ if it satisfies the following:
		\begin{enumerate}
			\renewcommand{\labelenumi}{\rm{(\roman{enumi})}}
			\item $J_\emptyset=\T$,
			\item for each $k=0,1,\cdots$ and $\sigma\in D_k$, $J_{\sigma*i}\ (i=1,\dots,n_{k+1})$ are subintervals of $J_\sigma$ and $\mathring{J}_{\sigma*i}\ (i=1,\dots,n_{k+1})$ are pairwise disjoint (where $\mathring{A}$ denotes the interior of $A$ with respect to the usual topology of $\T$),
			\item for each $k=1,2,\cdots$, $\sigma\in D_{k-1}$ and $1\leq i\leq n_k$, we have
			$$
			c_k=\frac{|J_{\sigma*i}|}{|J_\sigma|}
			$$
			(where $|A|$ denotes the length of a interval $A$ of $\T$).
		\end{enumerate}
		If $\mathscr{F}$ is a collection of closed intervals having homogeneous Moran structure, we write $$E(\mathscr{F})=\bigcap_{k\geq 0}\bigcup_{\sigma\in D_k}J_\sigma$$
		and call $E(\mathscr{F})$ the {\bf homogeneous Moran set} determined by $\mathscr{F}$.
	\end{dfn}
	
	\begin{figure}[h]
		\centering
		\includegraphics[width=7cm]{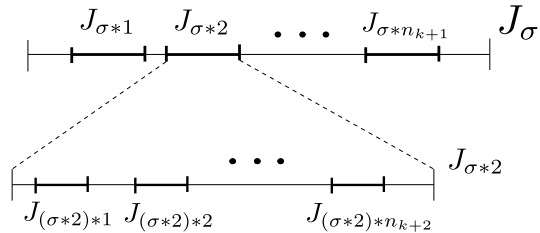}
		\caption{Homogeneous Moran structure}
	\end{figure}
	
	We write $\mathscr{M}\left(\{n_k\},\{c_k\}\right)$ for the set of homogeneous Moran sets determined by some collection $\mathscr{F}$ of closed intervals having homogeneous Moran structure about $\{n_k\}_{k=1}^\infty$ and $\{c_k\}_{k=1}^\infty$. Then we have the following estimate of Hausdorff dimension of homogeneous Moran sets.
	
	\begin{thm}[{\cite[Theorem 2.1]{FWW97}}]\label{dimMoranset}
		Let
		$$
		s_1=\liminf_{k\to\infty}\frac{\log n_1\cdots n_k}{-\log c_1\cdots c_k},\quad s_2=\liminf_{k\to\infty}\frac{\log n_1\cdots n_k}{-\log c_1\cdots c_kc_{k+1} n_{k+1}}.
		$$
		Then, for any $E\in\mathscr{M}\left(\{n_k\},\{c_k\}\right)$, we have
		$$
		s_2\leq \dim_H E\leq s_1.
		$$
	\end{thm}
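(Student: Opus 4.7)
The plan is to prove the two bounds separately, using the natural cover for the upper bound and a mass distribution argument for the lower bound, which are the standard tools for Moran-type constructions.

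For the upper bound $\dim_H E\le s_1$, I use $\{J_\sigma\}_{\sigma\in D_k}$ as a cover of $E$: at level $k$ there are $n_1\cdots n_k$ intervals, each of length $c_1\cdots c_k$. Given $s>s_1$, pick $\varepsilon>0$ with $s-\varepsilon>s_1$; by the definition of $\liminf$ there is a subsequence $k_j\to\infty$ along which $n_1\cdots n_{k_j}(c_1\cdots c_{k_j})^{s-\varepsilon}\le 1$, and therefore
\[
\mathcal{H}^s_{c_1\cdots c_{k_j}}(E)\le n_1\cdots n_{k_j}(c_1\cdots c_{k_j})^s\le(c_1\cdots c_{k_j})^{\varepsilon}\xrightarrow[j\to\infty]{}0,
\]
since the uniform bound $c_k<c<1$ forces $c_1\cdots c_{k_j}\to 0$. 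By monotonicity of $\mathcal{H}^s_\delta$ in $\delta$, this yields $\mathcal{H}^s(E)=0$; letting $s\searrow s_1$ gives $\dim_H E\le s_1$.

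For the lower bound $\dim_H E\ge s_2$, I define the natural probability measure $\mu$ on $E$ by setting $\mu(J_\sigma)=1/(n_1\cdots n_k)$ for each $\sigma\in D_k$, so that the unit mass is distributed uniformly among the children at every generation. By Frostman's mass distribution principle, it suffices to produce, for every $s<s_2$, a constant $C>0$ with $\mu(B(x,r))\le Cr^s$ for all $x\in E$ and all small $r>0$. Given such $r$, I would choose $k$ so that $c_1\cdots c_k c_{k+1}n_{k+1}\le r<c_1\cdots c_k$. Since $2r<2c_1\cdots c_k$ and the level-$k$ intervals are pairwise disjoint, $B(x,r)$ meets at most three of them, and within each it meets at most all $n_{k+1}$ of its level-$(k+1)$ children, yielding
\[
\mu(B(x,r))\le\frac{3n_{k+1}}{n_1\cdots n_{k+1}}=\frac{3}{n_1\cdots n_k}.
\]
The definition of $s_2$ says that for any $s<s_2$ and all sufficiently large $k$, $(c_1\cdots c_k c_{k+1}n_{k+1})^s n_1\cdots n_k>1$; thus $1/(n_1\cdots n_k)<(c_1\cdots c_k c_{k+1}n_{k+1})^s\le r^s$, so $\mu(B(x,r))\le 3r^s$ as required.

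The main obstacle is the lower bound, and within it the bookkeeping of how $B(x,r)$ crosses consecutive generations, together with the treatment of the transitional ranges $[c_1\cdots c_{k+1},c_1\cdots c_{k+1}n_{k+1})$ not captured by the choice of $k$ above. In such a range the same idea works, but now $B(x,r)$ can meet up to roughly $2r/c_1\cdots c_{k+1}+O(1)$ level-$(k+1)$ intervals per parent; combining this count with the same consequence of the definition of $s_2$ yields an analogous estimate $\mu(B(x,r))\le C'r^s$, at the cost of a slightly larger constant. The hypothesis $n_kc_k\le 1$ is essential throughout: it prevents the children of a given interval from accumulating so tightly that a small ball would contain more than the packing bound predicts. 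Once the uniform estimate $\mu(B(x,r))\le Cr^s$ holds for every $s<s_2$, Frostman's mass distribution principle concludes $\dim_H E\ge s_2$.
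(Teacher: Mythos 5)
Your proposal is correct, and I verified the details: the upper bound via natural level-$k$ covers along a subsequence realizing the $\liminf$ defining $s_1$ is sound, and the lower bound via the uniformly distributed measure $\mu(J_\sigma)=1/(n_1\cdots n_k)$ together with the mass distribution principle goes through, including the transitional range $c_1\cdots c_{k+1}\le r<c_1\cdots c_kc_{k+1}n_{k+1}$ you flag (there $B(x,r)$ meets at most $2r/(c_1\cdots c_{k+1})+2\le 4r/(c_1\cdots c_{k+1})$ level-$(k+1)$ intervals, and since $n_1\cdots n_k>(c_1\cdots c_{k+1}n_{k+1})^{-s}$ for large $k$ one gets $\mu(B(x,r))<4r^s$). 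Note that the paper does not give a proof of this theorem at all --- it is quoted from \cite{FWW97} --- so there is nothing to compare against in the paper itself; your argument is the standard one and is essentially the proof in \cite{FWW97}.
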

	
	We  begin the proof of Theorem \ref{doesnotconverge}. We take arbitrary $0<r<1$ near $1$. It is sufficient to construct a subset $E$ of $J$ with Hausdorff dimension $\geq r$.
	
	We first construct divergent subsequences $\{N_k\}_{k=1}^\infty$ and $\{L_k\}_{k=1}^\infty$ in $\N$ we need by induction. 
	We take a countable subset $\{\psi_i\}_{i=1}^\infty\subset C(\T)$ so that $0<\psi_i\leq 1$ on $\T$ for each $i$ and, for a sequence $\{\mu_n\}_{n=1}^\infty\subset M(\T)$ and $\mu\in M(\T)$, $\mu_n\to\mu$ as $n\to\infty$ is equivalent to $\int_\T \psi_i\ d\mu_n\to\int_\T \psi_i\ d\mu$ as $n\to\infty$ for any $i$.
	For each $d\in\N$, we write $I_{d,j}=\left[j/d,(j+1)/d\right] \mod \Z$ for $j=0,\dots,d-1$ and $I_d=\left\{I_{d,j}\left|j=0,\dots,d-1\right.\right\}$. We remark that $I_{ab}$ is a common Markov partition of $\T$ with respect to $T_a,T_b$ and $T_{ab}$.
	We put $N_0=L_0=0$. Let $k>0$ and suppose that $N_i,L_i$ are determined for $i=0,\cdots,k-1$ so that $L_{i-1}<N_i<\lfloor rL_i\rfloor<L_i$ for $1\leq i<k$.
	For $N\in\N$, we define
	\begin{equation}\label{XkN}
		X_{k,N}=\left\{x\in\T\left| \left|\frac{1}{N^2}\sum_{m,n=0}^{N-1}\psi_i(T_a^mT_b^nx)-\int_\T \psi_i\ dm_\T\right|<\frac{1}{3k},1\leq i\leq k\right.\right\}.
	\end{equation}
	Then, by Birkhoff's ergodic theorem for $m_\T\in E_{\times a,\times b}(\T)$, we have
	\begin{equation*}\label{largeXkN}
		m_\T(X_{k,N})>r
	\end{equation*}
	for sufficiently large $N\in\N$. We take $l_k\in\N$ so that
	\begin{equation}\label{psiicont}
		\left|\psi_i(x)-\psi_i(y)\right|<\frac{1}{3k},\quad i=1,\dots,k
	\end{equation}
	for any $x,y\in\T$ such that $|x-y|\leq (ab)^{-l_k}$.
	We take $N_k\in\N$ such that $N_k>L_{k-1}+l_k$, $m_\T(X_{k,N_k})>r$,
	\begin{equation}\label{largeNk}
		\frac{N_k^2-(N_k-L_{k-1}-l_k)^2}{N_k^2}<\frac{1}{6k},
	\end{equation}
	and
	\begin{equation*}
		\frac{\sum_{i=0}^{k-1}(N_i+L_i)}{N_k}<\frac{1}{k}.
	\end{equation*}
	Let $x\in X_{k,N_k}$. For $y\in\T$, suppose that $T_{ab}^{L_{k-1}}x$ and $T_{ab}^{L_{k-1}}y$ are contained in the same element of $I_{(ab)^{N_k-L_{k-1}}}$. Then, for any $L_{k-1}\leq m,n<N_k-l_k$, $T_a^mT_b^nx$ and $T_a^mT_b^ny$ are contained in the same element of $I_{(ab)^{l_k}}$.
	From the definition of $X_{k,N_k}$ (\ref{XkN}) and the inequalities (\ref{psiicont}) and (\ref{largeNk}), we have, for $1\leq i\leq k$,
	\begin{align*}
		&\ \left|\frac{1}{N_k^2}\sum_{m,n=0}^{N_k-1}\psi_i(T_a^mT_b^ny)-\int_\T\psi_i\ dm_\T \right|\\
		\leq&\ \left|\frac{1}{N_k^2}\sum_{m,n=0}^{N_k-1}\psi_i(T_a^mT_b^ny)-\frac{1}{N_k^2}\sum_{m,n=0}^{N_k-1}\psi_i(T_a^mT_b^nx) \right|
		+\left|\frac{1}{N_k^2}\sum_{m,n=0}^{N_k-1}\psi_i(T_a^mT_b^nx)-\int_\T\psi_i\ dm_\T \right|\\
		\leq&\  \left|\frac{1}{N_k^2}\sum_{m,n=L_{k-1}}^{N_k-l_k-1}\psi_i(T_a^mT_b^ny)-\frac{1}{N_k^2}\sum_{m,n=L_{k-1}}^{N_k-l_k-1}\psi_i(T_a^mT_b^nx) \right|+2\cdot\frac{N_k^2-(N_k-L_{k-1}-l_k)^2}{N_k^2}+\frac{1}{3k}\\
		<&\ \frac{1}{k}.
	\end{align*}
	We take $L_k\in\N$ so that $\lfloor rL_k\rfloor>N_k$ and 
	$$
	\frac{\sum_{i=0}^{k-1}(N_i+L_i)+N_k}{L_k}<\frac{1}{k}.
	$$
	
	As a result, we obtain divergent subsequences $\{N_k\}_{k=1}^\infty$ and $\{L_k\}_{k=1}^\infty$ in $\N$ such that
	\begin{enumerate}
		\renewcommand{\labelenumi}{\rm{(\roman{enumi})}}
		\item$$L_{k-1}<N_k<\lfloor rL_k\rfloor<L_k,\quad k=1,2,\cdots,$$
		where we write $L_0=0$,
		\item\label{ll} for $k=1,2,\cdots$, 
		$m_\T(X_{k,N_k})>r$,
		\item for $k=1,2,\cdots$, if $x\in X_{k,N_k}$ and $y\in\T$ satisfies that $T_{ab}^{L_{k-1}}x$ and $T_{ab}^{L_{k-1}}y$ are contained in the same element of $I_{(ab)^{N_k-L_{k-1}}}$, we have
		$$
		\left|\frac{1}{N_k^2}\sum_{m,n=0}^{N_k-1}\psi_i(T_a^mT_b^ny)-\int_\T\psi_i\ dm_\T \right|<\frac{1}{k}
		$$
		for $1\leq i\leq k$,
		\item
		$$
		\lim_{k\to\infty}\frac{\sum_{i=1}^{k-1}(N_i+L_i)}{N_k}=0,\quad\lim_{k\to\infty}\frac{\sum_{i=1}^{k-1}(N_i+L_i)+N_k}{L_k}=0.
		$$
	\end{enumerate}
	
	Next, we construct a subset $E$ mentioned above. We write $\Omega=\{0,1,\dots,ab-1\}^{\Z_{\geq0}}$ and $\pi:\Omega\to\T$ for the coding map about the Markov partition $I_{ab}$ with respect to $T_{ab}$, that is, for $\omega=(\omega_0,\omega_1,\dots)\in\Omega$, $x=\pi(\omega)\in\T$ is the element such that $\{x\}=\bigcap_{i=0}^\infty T_{ab}^{-i}I_{ab,\omega_i}$.
	For $k=1,2,\dots$, we define
	\begin{equation*}
		\Lambda_k=\left\{\omega\in\Omega\left|\ \omega_i=\omega'_i,L_{k-1}\leq i<N_k\text{ for some }\omega'\in\pi^{-1}X_{k,N_k}\right.\right\}.
	\end{equation*}
	For $L\leq N\in\Z_{\geq 0}$, we call a subset $C\subset \Omega$ a cylinder set on $[L,N]$ if $C=C_{L,N}(\omega')=\left\{\omega\in\Omega\left|\omega_i=\omega'_i,L\leq i\leq N\right.\right\}$ for some $\omega'\in\Omega$. Then $\Lambda_k$ can be written as the finite and disjoint union of cylinder sets on $[L_{k-1},N_k-1]$:
	$$
	\Lambda_k=\bigsqcup_{C\in\mathscr{C}_k}C,
	$$
	where $\mathscr{C}_k=\left\{C_{L_{k-1},N_k-1}(\omega')\left|\omega'\in\pi^{-1}X_{k,N_k}\right.\right\}$. We have $\pi(\Lambda_k)=\bigcup_{C\in\mathscr{C}_k}\pi(C)\supset X_{k,N_k}$, $m_\T(\pi(C))=(ab)^{L_{k-1}-N_k}$ for each $C\in\mathscr{C}_k$ and $\pi(C)$ and $\pi(C')$ intersect only on $\Q/\Z\subset\T$ if $C,C'\in\mathscr{C}_k$ and $C\neq C'$. Hence, by the property (ii) of $\{N_k\}_{k=1}^\infty$, we have
	$$
	r<m_\T(X_{k,N_k})\leq m_\T(\pi(\Lambda_k))=\sum_{C\subset\mathscr{C}_k}m_\T\left(\pi(C)\right)=\left|\mathscr{C}_k\right|(ab)^{L_{k-1}-N_k}
	$$
	and
	\begin{equation}\label{Ck}
		\left|\mathscr{C}_k\right|>r(ab)^{N_k-L_{k-1}}.
	\end{equation}
	We define
	\begin{equation*}
		\Lambda=\left\{\omega\in\Omega\left|\ \omega\in\Lambda_k\text{ and }\omega_i=0,\lfloor rL_k\rfloor\leq i<L_k\text{ for any }k=1,2,\dots\right.\right\}
	\end{equation*}
	and
	\begin{equation*}
		E=\pi(\Lambda).
	\end{equation*}
	We show that this $E$ is a subset of $J$ such that $\dim_HE\geq r$.
	
	\begin{prop}\label{EsubsetJ}
		$$E\subset J.$$
	\end{prop}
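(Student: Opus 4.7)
The plan is to show that for every $y=\pi(\omega)\in E$, the sequence $\delta_{\times a,\times b,y}^{N}$ does not converge in $M(\T)$, by examining its behavior along the two subsequences $\{N_k\}$ and $\{L_k\}$ that were built into $\Lambda$. Along $\{N_k\}$ the measures will converge to Lebesgue measure $m_\T$, while along $\{L_k\}$ they will place a definite amount of mass arbitrarily close to $0\in\T$, so no common limit can exist.

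For the first subsequence, since $\omega\in\Lambda\subset\Lambda_k$, there is some $\omega'\in\pi^{-1}X_{k,N_k}$ which agrees with $\omega$ on the block $L_{k-1}\leq i<N_k$. Letting $x=\pi(\omega')\in X_{k,N_k}$, the points $T_{ab}^{L_{k-1}}x$ and $T_{ab}^{L_{k-1}}y$ share their first $N_k-L_{k-1}$ base-$ab$ digits, hence lie in the same element of $I_{(ab)^{N_k-L_{k-1}}}$. Property (iii) of the construction then yields
$$\left|\frac{1}{N_k^2}\sum_{m,n=0}^{N_k-1}\psi_i(T_a^mT_b^ny)-\int_{\T}\psi_i\,dm_\T\right|<\frac{1}{k},\qquad 1\leq i\leq k,$$
and since $\{\psi_i\}$ determines weak-$*$ convergence in $M(\T)$, we conclude $\delta_{\times a,\times b,y}^{N_k}\to m_\T$.

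For the second subsequence, I would exploit the block of zeros $\omega_i=0$ for $\lfloor rL_k\rfloor\leq i<L_k$. Given $(m,n)\in[\lfloor rL_k\rfloor,L_k-1]^2$ with $m\geq n$ (the case $n>m$ is identical with the roles of $a,b$ swapped), the identity $T_a^mT_b^n=T_a^{m-n}T_{ab}^n$ and the fact that $T_{ab}^ny$ has $L_k-n$ leading zeros in base $ab$ give
$$a^mb^ny\leq a^{m-n}(ab)^{-(L_k-n)}=a^{-(L_k-m)}b^{-(L_k-n)}\leq\frac{1}{ab},$$
with no wrap-around modulo $1$. Hence for any fixed $K\in\N$, every $(m,n)$ in the square $[\lfloor rL_k\rfloor,L_k-K]^2$ satisfies $a^mb^ny\in[0,(ab)^{-K}]$, so
$$\liminf_{k\to\infty}\delta_{\times a,\times b,y}^{L_k}\bigl([0,(ab)^{-K}]\bigr)\geq(1-r)^2.$$
Testing against a continuous bump at $0$ whose $m_\T$-integral is less than $(1-r)^2/2$ (possible by choosing $K$ large), we conclude $\delta_{\times a,\times b,y}^{L_k}\not\to m_\T$.

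Combining the two steps, any candidate limit $\mu$ of $\delta_{\times a,\times b,y}^{N}$ would have to equal $m_\T$ because of the $\{N_k\}$ behavior and to differ from $m_\T$ because of the $\{L_k\}$ behavior; this contradiction forces $y\in J$. The main subtlety is the second step: because the empirical measure averages over the full square $0\leq m,n<L_k$ rather than only the diagonal $\{T_{ab}^jy\}$, one has to verify that a single block of zero digits in the base-$ab$ expansion simultaneously pulls a macroscopic proportion of off-diagonal orbit points close to $0$. The inequality $a^mb^ny\leq a^{-(L_k-m)}b^{-(L_k-n)}$, which says that the multiplicative factor $a^{m-n}$ (or $b^{n-m}$) cannot undo the $L_k-\max(m,n)$ leading zeros of $T_{ab}^{\min(m,n)}y$ as long as $m,n<L_k$, is precisely what enables this.
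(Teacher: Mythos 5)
Your proposal is correct and follows essentially the same two-step strategy as the paper: along $\{N_k\}$ you invoke property (iii) together with the agreement of $\omega$ and $\omega'$ on $[L_{k-1},N_k)$ to get $\delta^{N_k}_{\times a,\times b,y}\to m_\T$, and along $\{L_k\}$ you exploit the zero block $\omega_i=0$ for $\lfloor rL_k\rfloor\le i<L_k$ to pin a macroscopic proportion of the orbit inside a short interval at $0$. The explicit inequality $a^mb^ny\le a^{-(L_k-m)}b^{-(L_k-n)}$, obtained by writing $T_a^mT_b^n=T_a^{m-n}T_{ab}^n$ (for $m\ge n$, and symmetrically otherwise), is exactly the calculation underlying the paper's terse ``hence $T_a^mT_b^nx\in[0,(ab)^{-l}]$'' step, so you have simply spelled out a detail the paper leaves implicit; the final test against a bump function of small integral matches the paper's choice of $\varphi$ as well.
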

	
	\begin{proof}
		Let $x\in E$ and take $\omega\in\Lambda$ such that $x=\pi(\omega)$. For each $k\geq 1$, since $\omega\in\Lambda_k$, we can take $\omega'\in \Omega$ such that $x'=\pi(\omega')\in X_{k,N_k}$ and $\omega_i=\omega'_i$ for $L_{k-1}\leq i< N_k$. Then it follows that $T_{ab}^{L_{k-1}}x'$ and $T_{ab}^{L_{k-1}}x$ are contained in the same element of $I_{(ab)^{N_k-L_{k-1}}}$ and, from the property (iii) of $\{N_k\}_{k=1}^\infty$, we have
		$$
		\left|\frac{1}{N_k^2}\sum_{m,n=0}^{N_k-1}\psi_i(T_a^mT_b^nx)-\int_\T\psi_i\ dm_\T \right|<\frac{1}{k}
		$$
		for $1\leq i\leq k$. Hence we have
		$$
		\frac{1}{N_k^2}\sum_{m,n=0}^{N_k-1}\psi_i(T_a^mT_b^nx)\xrightarrow[k\to\infty]{}\int_\T\psi_i\ dm_\T
		$$
		for any $i$. This fact implies that
		\begin{equation}\label{convergentpart}
			\delta_{\times a,\times b,x}^{N_k}\xrightarrow[k\to\infty]{}m_\T.
		\end{equation}
		
		Next, we show that $\delta_{\times a,\times b,x}^{L_k}$ does not converge to $m_\T$ as $k\to\infty$.
		We take $l\in\N$ such that $(ab)^{-l}<2^{-1}(1-r)^2$ and $\varphi\in C(\T)$ such that $0\leq \varphi\leq 1$ on $\T$, $\varphi=1$ on $[0,(ab)^{-l}] \mod\Z$ and $(ab)^{-l}\leq \int_\T \varphi\  dm_\T<2^{-1}(1-r)^2$. For sufficiently large $k$, we have
		$$
		\lfloor rL_k\rfloor\leq rL_k<L_k-l,\quad
		\frac{2(1-r)lL_k-l^2}{L_k^2}<\frac{1}{2}(1-r)^2.
		$$
		Furthermore, since $\omega\in\Lambda$, it follows that $T_{ab}^ix\in [0,(ab)^{-1}] \mod\Z$ for any $\lfloor rL_k\rfloor\leq i<L_k$ and, hence $T_a^mT_b^nx\in [0,(ab)^{-l}]\mod\Z$ for any $\lfloor rL_k\rfloor\leq m,n<L_k-l$. Then we have
		\begin{align*}
			\frac{1}{L_k^2}\sum_{m,n=0}^{L_k-1}\varphi(T_a^mT_b^nx)\geq&\ \frac{1}{L_k^2}\sum_{m,n=\lfloor rL_k\rfloor}^{L_k-l-1}\varphi(T_a^mT_b^nx)\\
			=&\ \frac{(L_k-l-\lfloor rL_k\rfloor)^2}{L_k^2}\\
			\geq&\ \frac{\left((1-r)L_k-l\right)^2}{L_k^2}\\
			=&\ (1-r)^2-\frac{2(1-r)lL_k-l^2}{L_k^2}\\
			>&\ \frac{1}{2}(1-r)^2.
		\end{align*}
		Hence we have
		\begin{align*}
			\liminf_{k\to\infty}\frac{1}{L_k^2}\sum_{m,n=0}^{L_k-1}\varphi(T_a^mT_b^nx)\geq&\ \frac{1}{2}(1-r)^2\\
			>&\int_\T \varphi\ dm_\T.
		\end{align*}
		This implies that $\delta_{\times a,\times b,x}^{L_k}$ does not converge to $m_\T$ as $k\to\infty$. This and (\ref{convergentpart}) imply that $x\in J$ and we complete the proof.
	\end{proof}
	
	\begin{prop}\label{dimHE}
		$$\dim_H E\geq r.$$
	\end{prop}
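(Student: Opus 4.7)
The plan is to realize $E=\pi(\Lambda)$ as a homogeneous Moran set (in the sense of the definition preceding Theorem \ref{dimMoranset}) inside the $(ab)$-ary Markov partition, and then apply the lower bound $s_2\leq \dim_H E$ from Theorem \ref{dimMoranset}. A point $\omega\in\Omega$ lies in $\Lambda$ iff, for every $k\geq 1$, its digits on $[L_{k-1},N_k)$ form one of the $|\mathscr{C}_k|$ admissible patterns, its digits on $[N_k,\lfloor rL_k\rfloor)$ are arbitrary, and its digits on $[\lfloor rL_k\rfloor,L_k)$ are all $0$. Splitting each stage $k$ into these three sub-steps gives Moran sequences $\{n_j\}$, $\{c_j\}$ defined by
\begin{align*}
n_{3k-2} &= |\mathscr{C}_k|, & c_{3k-2} &= (ab)^{-(N_k-L_{k-1})},\\
n_{3k-1} &= (ab)^{\lfloor rL_k\rfloor-N_k}, & c_{3k-1} &= (ab)^{-(\lfloor rL_k\rfloor-N_k)},\\
n_{3k} &= 1, & c_{3k} &= (ab)^{-(L_k-\lfloor rL_k\rfloor)}.
\end{align*}
Since at each sub-step the number of children and the contraction ratio depend only on depth, the closures of the corresponding $(ab)$-ary cylinder intervals form a legitimate homogeneous Moran structure for $E$; the side conditions $n_jc_j\leq 1$ and $c_j\leq(ab)^{-1}$ are immediate from $N_k>L_{k-1}$, $\lfloor rL_k\rfloor>N_k$, and $L_k>\lfloor rL_k\rfloor$.

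The quantitative input is the bound $|\mathscr{C}_k|>r(ab)^{N_k-L_{k-1}}$ from (\ref{Ck}), together with condition (iv) (which forces $\sum_{i<k}(N_i+L_i)=o(N_k)$ and $o(L_k)$). Feeding (\ref{Ck}) into the product and telescoping gives, at the stage endpoint $j=3k$,
\[
\log(n_1 n_2\cdots n_{3k}) \;\geq\; k\log r + \Big(rL_k+(r-1)\sum_{i=1}^{k-1}L_i-k\Big)\log(ab),
\]
while $-\log(c_1\cdots c_{3k})=L_k\log(ab)$. Condition (iv) makes the $(r-1)\sum L_i$ and $-k$ corrections negligible, so the stage-endpoint ratios converge to $r$ from below.

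The main obstacle is that $s_2$ takes a liminf over \emph{all} indices $j$, not only stage endpoints, so I would carry out a three-case analysis according to $j\bmod 3$. For $j=3k$, the inequality $n_{j+1}c_{j+1}>r$ (again from (\ref{Ck})) bounds the denominator $-\log(c_1\cdots c_j c_{j+1} n_{j+1})$ by $L_k\log(ab)-\log r$, so the ratio reduces to the stage-endpoint one. For $j=3k-2$, one has $n_{j+1}c_{j+1}=1$ and the denominator equals exactly $N_k\log(ab)$; the numerator $\log(n_1\cdots n_{3k-2})$, bounded from below by the telescoping computation stopped one sub-step into stage $k$, comes out to at least $N_k\log(ab)-(1-r)\sum_{i<k}L_i\cdot\log(ab)-O(k)$, and the estimate $\sum_{i<k}L_i=o(N_k)$ from (iv) is precisely what rescues this case, making the ratio tend to $1\geq r$. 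For $j=3k-1$, $n_{j+1}=1$ makes the computation parallel to the stage-endpoint one. In every case the liminf of the ratio is at least $r$, so $s_2\geq r$ and Theorem \ref{dimMoranset} yields $\dim_H E\geq r$.
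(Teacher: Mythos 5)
Your proof is correct and follows the paper's strategy of realizing $E$ as a homogeneous Moran set, invoking Theorem \ref{dimMoranset}, and using (\ref{Ck}) together with property (iv) to bound $s_2$ from below. The one (harmless) divergence is that you collapse the free digits on $[N_k,\lfloor rL_k\rfloor)$ into a single Moran step $n_{3k-1}=(ab)^{\lfloor rL_k\rfloor-N_k}$, $c_{3k-1}=(ab)^{-(\lfloor rL_k\rfloor-N_k)}$, whereas the paper keeps them as $\lfloor rL_k\rfloor-N_k$ separate $(ab,\,(ab)^{-1})$ steps; this coarser parameterization leaves $E$ and $s_2\geq r$ unchanged but trims the case analysis from four sub-cases to three.
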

	
	\begin{proof}
		We show that $E$ is a homogeneous Moran set and use Theorem \ref{dimMoranset}.
		Let $k=1,2,\cdots$. First, we notice that, for $\omega\in\Lambda$, $\omega\in C$ for some $C\in\mathscr{C}_k$: the subfamily of cylinder sets on $[L_{k-1},N_k-1]$. We define
		\begin{equation*}\label{MoranE1}
			n_{k,1}=\left|\mathscr{C}_k\right|,\quad c_{k,1}=(ab)^{-(N_k-L_{k-1})}.
		\end{equation*}
		Second, we notice that, for $\omega\in\Lambda$, $\omega_i$ is arbitrary for $N_k\leq i<\lfloor rL_k\rfloor$.
		For each $N_k\leq i<\lfloor rL_k\rfloor$, we define
		\begin{equation*}\label{MoranE2}
			n_{k,2,i}=ab,\quad c_{k,2,i}=(ab)^{-1}.
		\end{equation*}
		And finally, we notice that, for $\omega\in\Lambda$, $\omega_i=0$ for $\lfloor rL_k\rfloor\leq i<L_k$. We define
		\begin{equation*}\label{MoranE3}
			n_{k,3}=1,\quad c_{k,3}=(ab)^{-(L_k-\lfloor rL_k\rfloor)}.
		\end{equation*}
		We write 
		\begin{align*}
			&\{n_l\}_{l=1}^\infty=\{n_{1,1},\dots,n_{k-1,3},n_{k,1},n_{k,2,N_k},\dots,n_{k,2,\lfloor rL_k\rfloor-1},n_{k,3},n_{k+1,1},\dots\},\\
			&\{c_l\}_{l=1}^\infty=\{c_{1,1},\dots,c_{k-1,3},c_{k,1},c_{k,2,N_k},\dots,c_{k,2,\lfloor rL_k\rfloor-1},c_{k,3},c_{k+1,1},\dots\}.
		\end{align*}
		Then, by the definition of $E$, it is seen that $E\in\mathscr{M}(\{n_l\},\{c_l\})$. Hence, by Theorem \ref{dimMoranset}, we have
		\begin{equation}\label{dimHEs2}
			\dim_H E\geq s_2=\liminf_{l\to\infty}\frac{\log n_1\cdots n_l}{-\log c_1\cdots c_lc_{l+1}n_{l+1}}.
		\end{equation}
		We estimate the right hand side of (\ref{dimHEs2}).
		
		Suppose $n_l=n_{k,1}$ and $c_l=c_{k,1}$. Then $n_{l+1}=n_{k,2,N_k}=ab$ and $c_{l+1}=c_{k,2,N_k}=(ab)^{-1}$.
		From the inequality (\ref{Ck}), it follows that
		\begin{align*}
			n_1\cdots n_l=&\prod_{j=1}^{k-1}(n_{j,1}n_{j,2,N_j}\cdots n_{j,2,\lfloor rL_j\rfloor-1}n_{j,3})\cdot n_{k,1}\\
			=&\prod_{j=1}^{k-1}\left(|\mathscr{C}_j|(ab)^{\lfloor rL_j\rfloor-N_j}\right)\cdot |\mathscr{C}_k|\\
			>&\prod_{j=1}^{k-1}(r(ab)^{\lfloor rL_j\rfloor-L_{j-1}})\cdot r(ab)^{N_k-L_{k-1}},
		\end{align*}
		and
		\begin{align*}
			c_1\cdots c_l=&\prod_{j=1}^{k-1}(c_{j,1}c_{j,2,N_j}\cdots c_{j,2,\lfloor rL_j\rfloor-1}c_{j,3})\cdot c_{k,1}\\
			=&\prod_{j=1}^{k-1}((ab)^{-(L_j-L_{j-1})})\cdot (ab)^{-(N_k-L_{k-1})}\\
			=&\ (ab)^{-N_k}.
		\end{align*}
		Hence we have
		\begin{align}\label{leqk1}
			\frac{\log n_1\cdots n_l}{-\log c_1\cdots c_{l}c_{l+1}n_{l+1}}\geq&\  \frac{\log\left\{\prod_{j=1}^{k-1}(r(ab)^{\lfloor rL_j\rfloor-L_{j-1}})\cdot r(ab)^{N_k-L_{k-1}}\right\}}{-\log\left\{(ab)^{-N_k}\right\}}\nonumber\\
			=&\ \frac{\sum_{j=1}^{k-1}\left\{\log r+(\lfloor rL_j\rfloor-L_{j-1})\log(ab)\right\}+\log r+(N_k-L_{k-1})\log(ab)}{N_k\log(ab)}\nonumber\\
			=&\ \frac{k\log r}{N_k\log (ab)}+\frac{\sum_{j=1}^{k-1}(\lfloor rL_j\rfloor-L_{j-1})}{N_k}+\frac{N_k-L_{k-1}}{N_k}.
		\end{align}
		From the property (iv) of $\{N_k\}_{k=1}^\infty$ and $\{L_k\}_{k=1}^\infty$, the right hand side converges to $1$ as $k\to\infty$.
		
		Suppose $n_l=n_{k,2,i}$ and $c_l=c_{k,2,i}$ for some $N_k\leq i<\lfloor rL_k\rfloor$. Then
		$$
		n_{l+1}=
		\begin{cases}
			n_{k,2,i+1}=ab,&i\neq \lfloor rL_k\rfloor-1\\
			n_{3,k}=1,&i=\lfloor rL_k\rfloor-1
		\end{cases},\quad
		c_{l+1}=
		\begin{cases}
			c_{k,2,i+1}=(ab)^{-1},&i\neq \lfloor rL_k\rfloor-1\\
			c_{3,k}=(ab)^{-(L_k-\lfloor rL_k\rfloor)},&i=\lfloor rL_k\rfloor-1
		\end{cases}.
		$$
		From the inequality (\ref{Ck}), it follows that
		\begin{align*}
			n_1\cdots n_l=&\prod_{j=1}^{k-1}(n_{j,1}n_{j,2,N_j}\cdots n_{j,2,\lfloor rL_j\rfloor-1}n_{j,3})\cdot n_{k,1}n_{k,2,N_k}\cdots n_{k,2,i}\\
			>&\prod_{j=1}^{k-1}(r(ab)^{\lfloor rL_j\rfloor-L_{j-1}})\cdot r(ab)^{N_k-L_{k-1}}\cdot (ab)^{i-N_k+1}\\
			=&\prod_{j=1}^{k-1}(r(ab)^{\lfloor rL_j\rfloor-L_{j-1}})\cdot r(ab)^{i-L_{k-1}+1},
		\end{align*}
		and
		\begin{align*}
			c_1\cdots c_l=&\prod_{j=1}^{k-1}(c_{j,1}c_{j,2,N_j}\cdots c_{j,2,\lfloor rL_j\rfloor-1}c_{j,3})\cdot c_{k,1}c_{k,2,N_k}\cdots c_{k,2,i}\\
			=&\ (ab)^{-N_k}\cdot (ab)^{-(i-N_k+1)}\\
			=&\ (ab)^{-(i+1)}.
		\end{align*}
		Hence we have
		\begin{align}\label{leqk2}
			&\ \frac{\log n_1\cdots n_l}{-\log c_1\cdots c_{l}c_{l+1}n_{l+1}}\nonumber\\
			\geq&\ \frac{\log\left\{\prod_{j=1}^{k-1}(r(ab)^{\lfloor rL_j\rfloor-L_{j-1}})\cdot r(ab)^{i-L_{k-1}+1}\right\}}{-\log\left\{(ab)^{-(i+1)}\cdot c_{l+1}n_{l+1}\right\}}\nonumber\\
			=&\ \frac{\sum_{j=1}^{k-1}\left\{\log r+(\lfloor rL_j\rfloor-L_{j-1})\log(ab)\right\}+\log r+(i-L_{k-1}+1)\log(ab)}{(i+1)\log(ab)-\log(c_{l+1}n_{l+1})}.
		\end{align}
		If $i<\lfloor rL_k\rfloor-1$, then $c_{l+1}n_{l+1}=1$ and the right hand side of (\ref{leqk2}) is
		\begin{align}\label{leqk21}
			&\ \frac{\sum_{j=1}^{k-1}\left\{\log r+(\lfloor rL_j\rfloor-L_{j-1})\log(ab)\right\}+\log r+(i-L_{k-1}+1)\log(ab)}{(i+1)\log(ab)}\nonumber\\
			\geq&\ \frac{k\log r}{N_k\log(ab)}+\frac{\sum_{j=1}^{k-1}(\lfloor rL_j\rfloor-L_{j-1})}{\lfloor rL_k\rfloor}-\frac{L_{k-1}}{N_k}+1.
		\end{align}
		From the property (iv) of $\{N_k\}_{k=1}^\infty$ and $\{L_k\}_{k=1}^\infty$, the right hand side converges to $1$ as $k\to\infty$. If $i=\lfloor rL_k\rfloor-1$, then $c_{l+1}n_{l+1}=(ab)^{-(L_k-\lfloor rL_k\rfloor)}$ and the right hand side of (\ref{leqk2}) is
		\begin{align}\label{leqk22}
			&\ \frac{\sum_{j=1}^{k-1}\left\{\log r+(\lfloor rL_j\rfloor-L_{j-1})\log(ab)\right\}+\log r+(\lfloor rL_k\rfloor-L_{k-1})\log(ab)}{L_k\log(ab)}\nonumber\\
			\geq&\ \frac{k\log r}{L_k\log(ab)}+\frac{\sum_{j=1}^{k-1}(\lfloor rL_j\rfloor-L_{j-1})}{L_k}+\frac{\lfloor rL_k\rfloor-L_{k-1}}{L_k}.
		\end{align}
		From the property (iv) of $\{N_k\}_{k=1}^\infty$ and $\{L_k\}_{k=1}^\infty$, the left hand side converges to $r$ as $k\to\infty$.
		
		Suppose $n_l=n_{k,3}$ and $c_l=c_{k,3}$. Then $n_{l+1}=n_{k+1,1}=|\mathscr{C}_{k+1}|$, $c_{l+1}=c_{k+1,1}=(ab)^{-(N_{k+1}-L_k)}$ and, from the inequality (\ref{Ck}),
		$$
		c_{l+1}n_{l+1}>r.
		$$
		From (\ref{Ck}) again, it follows that
		\begin{align*}
			n_1\cdots n_l=&\prod_{j=1}^k(n_{j,1}n_{j,2,N_j}\cdots n_{j,2,\lfloor rL_j\rfloor-1}n_{j,3})\\
			>&\prod_{j=1}^k(r(ab)^{\lfloor rL_j\rfloor-L_{j-1}}),
		\end{align*}
		and
		\begin{align*}
			c_1\cdots c_l=&\prod_{j=1}^k(c_{j,1}c_{j,2,N_j}\cdots c_{j,2,\lfloor rL_j\rfloor-1}c_{j,3})\\
			=&\ (ab)^{-L_k}.
		\end{align*}
		Hence we have
		\begin{align}\label{leqk3}
			&\ \frac{\log n_1\cdots n_l}{-\log c_1\cdots c_{l}c_{l+1}n_{l+1}}\nonumber\\
			\geq&\ \frac{\log\left\{\prod_{j=1}^k(r(ab)^{\lfloor rL_j\rfloor-L_{j-1}})\right\}}{-\log\left\{(ab)^{-L_k}\cdot c_{l+1}n_{l+1}\right\}}\nonumber\\
			\geq&\ \frac{\sum_{j=1}^k\left\{\log r+(\lfloor rL_j\rfloor-L_{j-1})\log(ab)\right\}}{L_k\log(ab)-\log r}\nonumber\\
			=&\ \frac{k\log r}{L_k\log (ab)-\log r}+\frac{\sum_{j=1}^{k-1}(\lfloor rL_j\rfloor-L_{j-1})\log (ab)}{L_k\log(ab)-\log r}
			+\frac{(\lfloor rL_k\rfloor-L_{k-1})\log(ab)}{L_k\log(ab)-\log r}.
		\end{align}
		From the property (iv) of $\{N_k\}_{k=1}^\infty$ and $\{L_k\}_{k=1}^\infty$, the left hand side converges to $r$ as $k\to\infty$.
		
		From the inequalies (\ref{leqk1}), (\ref{leqk2}), (\ref{leqk21}), (\ref{leqk22}) and (\ref{leqk3}), we have
		$$
		s_2=\liminf_{l\to\infty}\frac{\log n_1\cdots n_l}{-\log c_1\cdots c_{l}c_{l+1}n_{l+1}}\geq r
		$$
		and, from the inequality (\ref{dimHEs2}), we complete the proof.
	\end{proof}
	
	By Proposition \ref{EsubsetJ} and \ref{dimHE}, we have $1\geq\dim_HJ\geq\dim_HE\geq r$ and $0<r<1$ is arbitrary. Hence we have $\dim_H J=1$ and complete the proof of Theorem \ref{doesnotconverge}.
	
	\section{Proof of Theorem \ref{entropy0} and \ref{distribution}}\label{proofentropy}
	In this section, we prove Theorem \ref{entropy0} and \ref{distribution}. First, we prove Theorem \ref{distribution} as the proof is more elementary than that of Theorem \ref{entropy0}.
	
	\begin{proof}[Proof of Theorem \ref{distribution}]
		Suppose $a$ and $b$ are multiplicatively independent. Let $0<t<\min\{\log b,(\log a)^2/\log b\}$ and $x\in\T\setminus K_t$. Assume there exists $f\in C(\T)$ such that $f\geq 0$ on $\T$ and
		$$\liminf_{N\to\infty}\frac{1}{N^2}\sum_{m,n=0}^{N-1}f(a^mb^nx)< \frac{t}{\log a}\int_\T f\ dm_\T.$$
		We can take $0<\varepsilon<1$ and some divergent subsequence $\{N_k\}_{k=1}^\infty$ in $\N$ such that
		\begin{equation}\label{contradist1}
			\frac{1}{N_k^2}\sum_{m,n=0}^{N_k-1}f(a^mb^nx)< \frac{t}{\log a}\int_\T f\ dm_\T-\varepsilon.
		\end{equation}
		for each $k$. Furthermore, since $M(\T)$ is compact with respect to the weak* topology, we can take $\{N_k\}_{k=1}^\infty$ so that $\delta_{\times a,\times b,x}^{N_k}$ converges to some $\mu\in M(\T)$ as $k\to\infty$. Then $\mu\in M_{\times a,\times b}(\T)$ and $\mu$ is an accumulation point of $\delta_{\times a,\times b,x}^N\ (N\in\N)$. Since $x\in\T\setminus K_t$, we have $h_\mu(T_a)>t$. Here we decompose $\mu$ into $\times a,\times b$ ergodic components. There exists a Borel probability measure $\tau$ on the compact and metrizable space $M_{\times a,\times b}(\T)$ such that $\tau(E_{\times a,\times b}(\T))=1$ and
		$$\int_\T \varphi\ d\mu=\int_{E_{\times a,\times b}(\T)}\int_\T \varphi\ d\nu d\tau(\nu)
		$$
		for any $\varphi\in C(\T)$. By the upper semicontinuity of $h_\nu(T_a)$, it can be seen that
		$$h_\mu(T_a)=\int_{E_{\times a,\times b}(\T)}h_\nu(T_a)d\tau(\nu)
		$$
		and, by Theorem \ref{RudolphJohnson}, $h_\nu(T_a)=0$ for any $\nu\in E_{\times a,\times b}(\T)\setminus\{m_\T\}$. Hence we have
		\begin{equation}\label{contradist2}
			t<h_\mu(T_a)=\tau(\{m_\T\})h_{m_\T}(T_a)=\tau(\{m_\T\})\log a.
		\end{equation}
		Letting $k\to\infty$ in the inequality (\ref{contradist1}), it follows from (\ref{contradist2}) that
		\begin{align*}
			\frac{t}{\log a}\int_\T f\ dm_\T-\varepsilon\geq&\int_\T f\ d\mu\\
			=&\int_{E_{\times a,\times b}(\T)}\int_\T f \ d\nu d\tau(\nu)\\
			\geq&\ \tau(\{m_\T\})\int_\T f\ dm_\T\\
			\geq&\ \frac{t}{\log a}\int_\T f\ dm_\T
		\end{align*}
		and this is a contradiction. Hence we have
		$$\liminf_{N\to\infty}\frac{1}{N^2}\sum_{m,n=0}^{N-1}f(a^mb^nx)\geq \frac{t}{\log a}\int_\T f\ dm_\T$$
		for any $f\in C(\T)$ such that $f\geq 0$ on $\T$.
		
		Let $U\subset\T$ be an open subset. For any $0<\varepsilon<1$, there exists $f\in C(\T)$ such that $0\leq f\leq 1$ on $\T$, $f=0$ on $\T\setminus U$ and $\int_\T f\ dm_\T \geq m_\T(U)-\varepsilon$. Then, by the statement above, it follows that
		\begin{align*}
			\liminf_{N\to\infty}\frac{1}{N^2}\left|\left\{(m,n)\in\Z^2\left|\ 0\leq m,n<N, a^mb^nx\in U\right.\right\}\right|\geq&\ \liminf_{N\to\infty}\frac{1}{N^2}\sum_{m,n=0}^{N-1}f(a^mb^nx)\\
			\geq&\ \frac{t}{\log a}\int_\T f\ dm_\T\\
			\geq&\ \frac{t}{\log a}\left(m_\T(U)-\varepsilon\right).
		\end{align*}
		By letting $\varepsilon\to 0$, we have
		$$\liminf_{N\to\infty}\frac{1}{N^2}\left|\left\{(m,n)\in\Z^2\left|\ 0\leq m,n<N, a^mb^nx\in U\right.\right\}\right|\geq\frac{t}{\log a}\cdot m_\T (U)
		$$
		and complete the proof.
	\end{proof}
	
	Next, we prove Theorem \ref{entropy0}.
	The following argument can be thought as an extension of that in \cite{Bo} to the $\Z_{\geq0}^2$-action by $T_a$ and $T_b$.
	Let $k\in\N$. $p=(p_1,\dots,p_k)\in\R^k$ is a {\bf $\boldsymbol{k}$-distribution} if $\sum_{i=1}^kp_i=1$ and $p_i\geq0$. For such a $p$, we write $H(p)=-\sum_{i=1}^kp_i\log p_i$ for the entropy of $p$. If $N\in\N$ and $c=(c_1,\dots,c_N)\in\{1,\dots,k\}^N$, we define the $k$-distribution $\dist(c)=(p_1,\dots,p_k)$, where $p_i=N^{-1}\left|\left\{n\in\{1,\dots,N\}\left|\ c_n=i\right.\right\}\right|$.
	
	\begin{lem}\label{fund}
		For $k,N\in\N$ and $t>0$, let
		$$R(k,N,t)=\left\{c\in\{1,\dots,k\}^N\left|H(\dist(c))\leq t\right.\right\}.$$
		Then, fixing $k$ and $t$, 
		$$\limsup_{N\to\infty}\frac{1}{N}\log\left|R(k,N,t)\right|\leq t.$$
	\end{lem}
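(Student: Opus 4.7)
The plan is to apply the standard multinomial type-counting argument from information theory. First I would partition $R(k,N,t)$ according to the exact distribution of each string: for each $k$-distribution $p=(p_1,\dots,p_k)$ satisfying $Np_i\in\Z_{\geq 0}$ for every $i$, the set of $c\in\{1,\dots,k\}^N$ with $\dist(c)=p$ consists of those strings in which symbol $i$ appears exactly $Np_i$ times, so its cardinality is the multinomial coefficient $\binom{N}{Np_1,\dots,Np_k}$. Only distributions $p$ of this ``rational'' form can arise as $\dist(c)$, so
$$|R(k,N,t)|=\sum_{\substack{p\colon Np_i\in\Z_{\geq 0}\\ H(p)\leq t}}\binom{N}{Np_1,\dots,Np_k}.$$

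Next I would bound each multinomial coefficient by $e^{NH(p)}$. The cleanest route is probabilistic: if one draws $N$ independent samples from the distribution $p$, the probability that the empirical frequencies equal $p$ exactly is $\binom{N}{Np_1,\dots,Np_k}\prod_i p_i^{Np_i}\leq 1$, so rearranging (with the convention $0\log 0=0$, under which $p_i^{Np_i}=1$ whenever $p_i=0$) gives
$$\binom{N}{Np_1,\dots,Np_k}\leq \prod_{i=1}^k p_i^{-Np_i}=\exp(NH(p))\leq e^{Nt}$$
for every $p$ in the sum above.

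Finally I would bound the number of such $p$ by $(N+1)^k$, since each coordinate $Np_i$ lies in $\{0,1,\dots,N\}$. Combining the two estimates yields
$$|R(k,N,t)|\leq (N+1)^k\cdot e^{Nt},$$
and taking logarithms, dividing by $N$ and sending $N\to\infty$ gives
$$\limsup_{N\to\infty}\frac{1}{N}\log|R(k,N,t)|\leq t+\lim_{N\to\infty}\frac{k\log(N+1)}{N}=t,$$
since $k$ and $t$ are fixed. There is no real obstacle here; the only points to be careful about are the $p_i=0$ case (handled by the standard entropy convention) and the fact that the distributions actually realized by strings are rational, which is precisely what makes the cardinality bound $(N+1)^k$ available.
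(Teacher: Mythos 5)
Your proof is correct and complete. The paper itself gives no proof, citing Bowen's Lemma 4 instead; your argument is the same standard type-counting estimate Bowen uses, differing only in that you bound the multinomial coefficient $\binom{N}{Np_1,\dots,Np_k}\leq e^{NH(p)}$ by the slick probabilistic inequality rather than via Stirling's formula.
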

	\begin{proof}
		See \cite[Lemma 4]{Bo}.
	\end{proof}
	Suppose $\beta=\{\beta_1,\dots,\beta_k\}$ is a finite cover of $\T$. For $x\in\T$ and $N\in\N$, we say that $(\beta_{i_0},\dots,\beta_{i_{N-1}})\in\beta^N$ is an {\bf $\boldsymbol{N}$-choice for $\boldsymbol{x}$ with respect to $\boldsymbol{T_a}$ and $\boldsymbol{\beta}$} if $T_a^nx\in\beta_{i_n}$ for $0\leq n< N$.  Then $(\beta_{i_0},\dots,\beta_{i_{N-1}})$ gives a $k$-distribution $q(\beta_{i_0},\dots,\beta_{i_{N-1}})=\dist(i_0,\dots,i_{N-1})$. We write $\Dist_\beta(x,N)$ for the set of such $k$-distributions obtained for all $N$-choices for $x$.
	
	Suppose $B=\{B_i\}$ is a finite cover of $\T$. For $E\subset\T$, we write $E\prec B$ if $E\subset B_i$ for some $B_i\in B$ and, for a family of subsets $E=\{E_j\}$, $E\prec B$ if $E_j\prec B$ for any $E_j\in E$. For a map $T:\T\to\T$, $l\in\N$ and a family of subsets $E=\{E_j\}$, we define $T^{-l}E=\{T^{-l}E_j\}$.
	
	\begin{lem}\label{keylemma}
		Let $B=\{B_i\}$ be a finite open cover of $\T$ such that every $B_i\in B$ is an open interval on $\T$ such that $|B_i|<1/(1+a)$ and, for each $M\in\N$, $\beta_M$ be a finite cover of $\T$ such that $\beta_M\prec T_a^{- l}B$ for $0\leq l<M$.
		For $0<t<\log a$, we define $Q\left(t,\left\{\beta_M\right\}_{M\in\N}\right)$ as the set of $x\in\T$ satisfying the following: for any $0<\varepsilon<1$ and $M_0\in\N$, there exists $M\geq M_0$ such that, 
		$$
		\text{for infinitely many } N\in\N,\  \frac{1}{M}H(q)\leq t+\varepsilon \text{ for some } q\in\bigcup_{0\leq n<tN/\log b}\Dist_{\beta_M}(T_b^nx,N).$$
		Then we have
		$$\dim_H Q\left(t,\left\{\beta_M\right\}_{M\in\N}\right)\leq\frac{2t}{\log a+t}.$$
	\end{lem}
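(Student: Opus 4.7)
The plan is to bound $\dim_H Q(t, \{\beta_M\})$ by a covering argument. For each $\varepsilon > 0$ and $M \in \N$, let $Q_{M, \varepsilon}$ denote the set of $x \in \T$ for which there exist infinitely many $N \in \N$ together with some $n < tN/\log b$ and $q \in \Dist_{\beta_M}(T_b^n x, N)$ satisfying $H(q) \le M(t+\varepsilon)$. Then $Q(t, \{\beta_M\}) \subset \bigcup_{M \ge M_0} Q_{M, \varepsilon}$ for every $M_0 \in \N$ and $\varepsilon > 0$, so by countable stability of Hausdorff dimension it suffices to show $\dim_H Q_{M, \varepsilon} \le (2t+\varepsilon)/(\log a + t)$ with a bound uniform in $M$, then let $\varepsilon \to 0$.

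Fix $M, \varepsilon$. Write $[c] := \bigcap_{k=0}^{N-1} T_a^{-k} \beta_{i_k}$ for an $N$-choice $c = (\beta_{i_0}, \ldots, \beta_{i_{N-1}}) \in \beta_M^N$, and let $\mathcal{C}_N$ denote the set of $c$'s with $[c] \ne \emptyset$ and $H(\dist(c)) \le M(t+\varepsilon)$. Since $\beta_M \prec T_a^{-l} B$ for $0 \le l < M$ and $|B_i| < 1/(1+a)$, each connected component of $[c]$ has diameter $\le a^{-(N+M-1)}/(1+a)$; the size bound $|B_i|<1/(1+a)$ iteratively forces an interval at scale $a^{-m}$ to meet at most two preimage intervals of $B$ at the next step, so the component count per cylinder grows only subexponentially in $N$ and is absorbed into error terms.

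The crux is to show $|\mathcal{C}_N| \le C_M \cdot e^{N(t+\varepsilon)(1+o_N(1))}$. A direct application of Lemma \ref{fund} to $\beta_M^N$ gives only $e^{NM(t+\varepsilon)(1+o_N(1))}$, which is too weak as it grows with $M$. To improve, for each $k \in \{0,\ldots,M-1\}$ consider the non-overlapping subsequence $c^{(k)} := (c_k, c_{k+M}, c_{k+2M}, \ldots)$ of length $\approx N/M$; since $\dist(c) \approx M^{-1}\sum_{k=0}^{M-1}\dist(c^{(k)})$ up to boundary, concavity of entropy yields $H(\dist(c)) \ge M^{-1}\sum_k H(\dist(c^{(k)}))$, so some $k_0$ realizes $H(\dist(c^{(k_0)})) \le M(t+\varepsilon)$. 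Lemma \ref{fund} applied to sequences over $\beta_M$ of length $\lfloor N/M \rfloor$ bounds the number of admissible $c^{(k_0)}$'s by $e^{N(t+\varepsilon)(1+o_N(1))}$. Reconstructing $c$ from $c^{(k_0)}$ uses that $\beta_M \prec \bigvee_{l=0}^{M-1}T_a^{-l}B$: the $B$-digit tuples associated with consecutive $\beta_{i_j}$'s overlap in $M-1$ entries, so the coordinates not pinned by $c^{(k_0)}$ are determined up to $a^{O(M)}$ choices. Summing over $k$ gives the claimed bound with $C_M$ depending only on $M$.

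By construction $Q_{M, \varepsilon} \subset \bigcap_{N_0 \in \N}\bigcup_{N \ge N_0} \bigcup_{n < tN/\log b}\bigcup_{c \in \mathcal{C}_N} T_b^{-n}([c])$, and each $T_b^{-n}([c])$ is a union of $O_M(b^n)$ intervals of diameter at most $a^{-(N+M)}/b^n$. For $s \in (0,1)$, taking $N_0$ so large that all covers at levels $N \ge N_0$ have diameter below a prescribed $\delta$, the $s$-dimensional cost is bounded by
\begin{align*}
\sum_{N \ge N_0}\sum_{n \le tN/\log b}|\mathcal{C}_N| \cdot O_M(b^n) \cdot \left(\frac{a^{-(N+M)}}{b^n}\right)^s
&\le C'_M \sum_{N \ge N_0} e^{N(t+\varepsilon)}\, a^{-s(N+M)}\, e^{tN(1-s)} \\
&\le C''_M \sum_{N \ge N_0}\exp\bigl\{N[(2t+\varepsilon) - s(\log a + t)]\bigr\},
\end{align*}
using $\sum_{n \le tN/\log b} b^{n(1-s)} = O(e^{tN(1-s)})$ for $s<1$. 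For $s > (2t+\varepsilon)/(\log a + t)$ the bracketed exponent is negative, the sum vanishes as $N_0 \to \infty$, hence $\mathcal{H}^s(Q_{M,\varepsilon}) = 0$ and $\dim_H Q_{M,\varepsilon} \le (2t+\varepsilon)/(\log a+t)$; letting $\varepsilon \to 0$ finishes the proof. The main obstacle is the refined cylinder count: the naive Lemma \ref{fund} bound loses an extra factor of $M$ in the exponent, and removing it requires both the non-overlapping $M$-block decomposition and the overlap rigidity coming from $\beta_M \prec T_a^{-l}B$ for all $l<M$.
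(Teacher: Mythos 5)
Your overall architecture matches the paper's: subsample the length-$N$ choice into $M$ arithmetic progressions $c^{(k)}$, use concavity of $H$ to locate a low-entropy subsequence, apply Lemma~\ref{fund} at the subsampled length to remove the factor of $M$ from the exponent, then run a Hausdorff covering estimate with the extra $b^n$ factor coming from $T_b^{-n}$. The decomposition and the final exponent bookkeeping are essentially identical to what the paper does. However, there is a genuine gap in the step where you convert the count of admissible $c^{(k_0)}$'s back into a bound on $|\mathcal{C}_N|$.

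The claim that ``the coordinates not pinned by $c^{(k_0)}$ are determined up to $a^{O(M)}$ choices'' relies on the $B$-word (the string of elements of $\bigvee_{l=0}^{M-1}T_a^{-l}B$ containing $T_a^j y$ for the unpinned positions) determining the corresponding entry of $\beta_M$. But the hypothesis is only $\beta_M \prec T_a^{-l}B$ for $0\le l<M$; nothing forces the map from $\beta_M$ to $B$-words to be injective. If two or more elements of $\beta_M$ sit inside the same element of $\bigvee_l T_a^{-l}B$ (which is exactly what happens in the application, where $\beta_M$ is built from a partition $\alpha_M$ that strictly refines $\bigvee_l T_a^{-l}B$), the ambiguity is per unpinned coordinate, not global. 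There are roughly $N(1-1/M)$ unpinned coordinates, so the reconstruction multiplicity can grow exponentially in $N$, and the estimate $|\mathcal{C}_N|\le C_M e^{N(t+\varepsilon)(1+o(1))}$ does not follow. The naive bound $e^{NM(t+\varepsilon)(1+o(1))}$ remains, and as you correctly observe that is useless because the factor $M$ survives in the exponent.

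The paper sidesteps this by never trying to count or reconstruct the full choice. Instead it passes to $MN$-choices, picks the minimizing one, finds the good residue $l$ by concavity, and then covers $S(M,N,n,l)$ directly by the sets $A_{M,l}$ indexed \emph{only} by the truncated datum $(i_0,\dots,i_{l-1},\, i_l, i_{M+l},\dots,i_{M(N-1)+l})$. Because each $\beta_M$-entry carries $M$ consecutive $B$-digits, $A_{M,l}$ already satisfies $A_{M,l}\prec T_a^{-j}B$ for all $0\le j<MN$, hence $\diam A_{M,l}<a^{-MN+1}$, and the count of these sets is $k_M^{l}\,|R(k_M,N,M(t+\varepsilon))|\le k_M^M e^{NM(t+2\varepsilon)}$ with no reconstruction needed. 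If you replace your cover $\{[c]: c\in\mathcal{C}_N\}$ by the corresponding subsampled cylinders (equivalently, enlarge $[c]$ to the set determined by $c^{(k_0)}$ together with the $O(M)$ leading coordinates), your covering estimate goes through and the proof is correct. As written, though, the reconstruction step is a real gap.
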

	\begin{proof}
		For each $M\in\N$, let $\beta_M=\left\{\beta_{M,1},\dots,\beta_{M,k_M}\right\},k_M=\left|\beta_M\right|$.
		We take $0<\varepsilon<3^{-1}(\log a-t)$. By Lemma \ref{fund}, there exists $N_{\varepsilon,M}\in\N$ such that
		\begin{equation}\label{fundapp}
			\left|R(k_M,N,M(t+\varepsilon))\right|\leq e^{NM(t+2\varepsilon)}
		\end{equation}
		for any $N\geq N_{\varepsilon,M}$.
		We take $M_0\in\N$ such that $M_0\geq t^{-1}\log b$. Since $H(p)$ is uniformly continuous in a $k_M$-distribution $p$, we can see that, for any $x\in Q\left(t,\left\{\beta_M\right\}_{M\in\N}\right)$, there exists $M\geq M_0$ such that,
		$$
		\text{for infinitely many } N\in\N,\  \frac{1}{M}H(q)\leq t+\varepsilon \text{ for some } q\in\bigcup_{0\leq n<tMN/\log b}\Dist_{\beta_M}(T_b^nx,MN).$$
		Indeed, we obtain this by adding some $0\leq l<M$ to $N$ in the definition of $Q\left(t,\left\{\beta_M\right\}_{M\in\N}\right)$ for $\varepsilon/2$. For each $M\in\N$, we take $N'_{\varepsilon,M}\in\N$ such that
		\begin{equation}\label{NM}
			N'_{\varepsilon,M}\geq N_{\varepsilon,M}, \quad M^2(k_M)^M\sum_{N\geq N'_{\varepsilon,M}}Ne^{-\varepsilon MN}<\frac{1}{2^M}.
		\end{equation}
		
		For each $M,N\in\N$ and $x\in\T$, we take $MN$-choice $(\beta_{M,i_0(x)},\dots,\beta_{M,i_{MN-1}(x)})$ for $x$ with respect to $T_a$ and $\beta_M$ such that
		\begin{equation}\label{min_choice}
			H(q(\beta_{M,i_0(x)},\dots,\beta_{M,i_{MN-1}(x)}))=\min_{q\in\Dist_{\beta_M}(x,MN)}H(q). 
		\end{equation}
		For $0\leq l<M$, we define a $k_M$-distribution
		$$q_{M,l}(x,N)=\dist(i_l(x),i_{M+l}(x),\dots,i_{M(N-1)+l}).$$
		Then $q(\beta_{M,i_0(x)},\dots,\beta_{M,i_{MN-1}(x)})=M^{-1}\sum_{l=0}^{M-1}q_{M,l}(x,N)$. Hence, by the concavity of $H(p)$ in a $k_M$-distribution $p$, we have $H(q_{M,l}(x,N))\leq H(q(\underline{\beta_M}_{MN}(x)))$ for some $0\leq l<M$, depending on $M,N$ and $x$.
		
		For $M\geq M_0$, $N\geq N'_{\varepsilon,M}$ and $n\in\Z$ with $0\leq n<tMN/\log b$ and $0\leq l<M$, we define
		$$S(M,N,n,l)=\left\{x\in\T\left|H(q_{M,l}(T_b^nx,N))\leq M(t+\varepsilon)\right.\right\}.$$
		Then we have
		$$Q\left(t,\left\{\beta_M\right\}_{M\in\N}\right)\subset\bigcup_{\substack{M\geq M_0,N\geq N'_{\varepsilon, M}\\0\leq n<tMN/\log b,0\leq l<M}}S(M,N,n,l).$$
		Let $M\geq M_0,N\geq N'_{\varepsilon,M},0\leq n<tMN/\log b,0\leq l<M$ and $x\in S(M,N,n,l)$.
		For the $MN$-choice
		$(\beta_{M,i_0(T_b^nx)},\dots,\beta_{M,i_{MN-1}(T_b^nx)})$ for $T_b^nx$ with respect to $T_a$ and $\beta_M$ as (\ref{min_choice}), we have
		$$(i_l(T_b^nx),i_{M+l}(T_b^nx),\dots,i_{M(N-1)+l}(T_b^nx))\in R(k_M,N,M(t+\varepsilon)).$$ 
		We define
		\begin{align*}
			A_{M,l}(T_b^nx,N)=&\left\{y\in\T\left|T_a^jy\in\beta_{M,i_j(T_b^nx)}\text{\rm{ for }}0\leq j<l,\right.\right.\\
			&\qquad \qquad \qquad \left.T_a^{Mr+l}y\in\beta_{M,i_{Mr+l}(T_b^n(x))}\text{\rm{ for }}0\leq r<N\right\}.
		\end{align*}
		Then, by the assumption of  $B$ and $\beta_M$, $A_{M,l}(T_b^nx,N)\prec T_a^{-j}B$ for $0\leq j<MN$. Hence, by the assumption that $|B_i|<1/(a+1)$ for each $B_i\in B$, we have $\diam{A_{M,l}(T_b^nx,N)}<a^{-MN+1}$, where $\diam A$ denotes the diameter $\sup_{x,y\in A}|x-y|$ of $A\subset\T$ with respect to the standard metric of $\T$. We devide $A_{M,l}(T_b^nx,N)$ as $A_{M,l}(T_b^nx,N)=\bigsqcup_{s=0}^{b-1}A_{M,l}^s(T_b^nx,N)$, where $A_{M,l}^s(T_b^nx,N)=A_{M,l}(T_b^nx,N)\cap(\left[s/b,(s+1)/b\right) \mod \Z)$, then $x\in T_b^{-n}A_{M,l}(T_b^nx,N)=\bigsqcup_{s=0}^{b-1}T_b^{-n}A_{M,l}^s(T_b^nx,N)$. For each $s=0,\dots,b-1$, we get the $b^n$ components of
		$T_b^{-n}A_{M,l}^s(T_b^nx,N)$, which we write $E_{M,l}^{s,u}(x,N,n),u=1,\dots,b^n$, satisfying 
		\begin{equation}\label{diamEM0}
			\diam E_{M,l}^{s,u}(x,N,n)<b^{-n}a^{-MN+1}.
		\end{equation}
		
		We define
		\begin{align*}
			E(M_0)=&\left\{E_{M,l}^{s,u}(x,N,n)\left| M\geq M_0,N\geq N'_{\varepsilon,M},0\leq n<tMN/\log b,0\leq l<M, \right.\right.\\
			&\qquad\qquad\qquad\qquad\qquad\qquad x\in S(M,N,n,l), s=0,\dots,b-1,u=1,\dots,b^n\bigr\},
		\end{align*}
		then $E(M_0)$ is a cover of $Q\left(t,\left\{\beta_M\right\}_{M\in\N}\right)$ such that $\diam E(M_0)\leq a^{-M_0+1}$. 
		Fix $M\geq M_0,N\geq N'_{\varepsilon,M},0\leq n<tMN/\log b$ and $0\leq l<M$. The number of $A_{M,l}(T_b^nx,N)\ (x\in S(M,N,n,l))$ is bounded by $|\beta_M|^l\left|R(k_M,N,M(t+\varepsilon))\right|=(k_M)^l\left|R(k_M,N,M(t+\varepsilon))\right|$. Hence the number of $E_{M,l}^{s,u}(x,N,n)\ (x\in S(M,N,n,l),s=0,\dots,b-1,u=1,\dots,b^n)$ is bounded by $b^{n+1}(k_M)^l\left|R(k_M,N,M(t+\varepsilon))\right|$.
		We put $\lambda=(\log a-t-3\varepsilon)/(\log a+t)$. Since $\log a-t>3\varepsilon$, we have $\lambda>0$. We also have $1-\lambda=((1+\lambda)t+3\varepsilon)/\log a>0$.
		Using the inequalities (\ref{fundapp}) and (\ref{diamEM0}), we have
		\begin{align*}
			&\sum_{E\in E(M_0)}(\diam E)^{1-\lambda}\\
			\leq&\sum_{\substack{M\geq M_0,N\geq N'_{\varepsilon,M},\\0\leq n<tMN/\log b,0\leq l<M}}b^{n+1}(k_M)^l\left|R(k_M,N,M(t+\varepsilon))\right|\left(b^{-n}a^{-MN+1}\right)^{1-\lambda}\\
			\leq&\ b\sum_{M\geq M_0}M(k_M)^M\sum_{N\geq N'_{\varepsilon,M},0\leq n<tM N/\log b}b^{\lambda n}\left|R(k_M,N,M(t+\varepsilon))\right|
			a^{(-MN+1)((1+\lambda)t+3\varepsilon)/\log a}\\
			\leq&\ b\sum_{M\geq M_0}M(k_M)^M\sum_{N\geq N'_{\varepsilon,M},0\leq n<tM N/\log b}b^{\lambda n}e^{MN(t+2\varepsilon)}e^{(-MN+1)((1+\lambda)t+3\varepsilon)}\\
			=&\ e^{(1+\lambda)t+3\varepsilon}b\sum_{M\geq M_0}M(k_M)^M\sum_{N\geq N'_{\varepsilon,M},0\leq n<tM N/\log b}b^{\lambda n}e^{-MN(\lambda t+\varepsilon)}\\
			=&\ e^{(1+\lambda)t+3\varepsilon}b\sum_{M\geq M_0}M(k_M)^M\sum_{N\geq N'_{\varepsilon,M}}\left(\frac{tMN}{\log b}+1\right)b^{tMN\lambda/\log b}e^{-MN(\lambda t+\varepsilon)}\\
			\leq&\ \frac{2te^{(1+\lambda)t+3\varepsilon}b}{\log b}\sum_{M\geq M_0}M^2(k_M)^M\sum_{N\geq N'_{\varepsilon,M}}Ne^{-MN\varepsilon}\\
			\leq&\ \frac{2te^{(1+\lambda)t+3\varepsilon}b}{\log b}\sum_{M\geq M_0}\frac{1}{2^M}.
		\end{align*}
		The last inequality is due to (\ref{NM}).
		When $M_0\to\infty$, the right hand side converges to $0$. This implies that
		$\dim_H Q\left(t,\left\{\beta_M\right\}_{M\in\N}\right)\leq 1-\lambda=(2t+3\varepsilon)/(\log a+t)$. 
		By $\varepsilon\to 0$, we obtain the lemma.
	\end{proof}
	
	Before starting a proof of Theorem \ref{entropy0}, we prepare a notion. Suppose $\beta=\left\{\beta_1,\dots,\beta_k\right\}$ is a finite cover of $\T$. For $x\in\T$ and $N\in\N$, we say that $(\beta_{i_{m,n}})_{0\leq m,n<N}\in\beta^{\left\{(m,n)\left|0\leq m,n<N\right.\right\}}$ is an {\bf $\boldsymbol{N}$-choice for $\boldsymbol{x}$ with respect to $\boldsymbol{T_a,T_b}$ and $\boldsymbol{\beta}$} if $T_a^mT_b^nx\in\beta_{i_{m,n}}$ for $0\leq m,n<N$. Then $(\beta_{i_{m,n}})_{0\leq m,n<N}$ gives a $k$-distribution $q((\beta_{i_{m,n}})_{0\leq m,n<N})=\dist((i_{m,n})_{0\leq m,n<N})$.
	We notice that, if $\underline{\beta}=(\beta_{i_{m,n}})_{0\leq m,n<N}$ is an $N$-chioce for $x$ with respect to $T_a,T_b$ and $\beta$, then, for $0\leq n<N$, $\underline{\beta}_n=\left(\beta_{i_{0,n}},\dots,\beta_{i_{N-1,n}}\right)$ is an $N$-choice for $T_b^nx$ with respect to $\beta$ and $T_a$, and $q(\underline{\beta})=N^{-1}\sum_{n=0}^{N-1}q(\underline{\beta}_n)$.
	
	\begin{proof}[Proof of Theorem \ref{entropy0}]
		Let $B$ be a finite open cover of $\T$ as in Lemma \ref{keylemma} and $\alpha$ be a finite Borel partition of $\T$ such that $\overline{\alpha_i}\prec B$ for each $\alpha_i\in\alpha$. For each $M\in\N$, we write $\alpha_M=\bigvee_{i=0}^{M-1}T_a^{-i}\alpha=\left\{\alpha_{M,1},\dots,\alpha_{M,k_M}\right\},k_M=\left|\alpha_M\right|$ and take a finite open cover $\beta_M=\left\{\beta_{M,1},\dots,\beta_{M,k_M}\right\}$ of $\T$ such that $\alpha_{M,i}\subset\beta_{M,i}$ and $\beta_{M,i}\prec T_a^{-l}B\ (0\leq l<M)$ for each $i=1,\dots,k_M$.
		Let $0<t<\min\{\log a,\log b\}$.
		If we show $K_{t^2/\log b}\subset Q\left(t,\left\{\beta_M\right\}_{M\in\N}\right)$, then, by Lemma \ref{keylemma}, we have $\dim_H K_{t^2/\log b}\leq \dim_H Q\left(t,\left\{\beta_M\right\}_{M\in\N}\right)\leq 2t/(\log a+t)$ and, by putting $t'=t^2/\log b$, obtain the theorem. We will show 
		$K_{t^2/\log b}\subset Q\left(t,\left\{\beta_M\right\}_{M\in\N}\right)$.
		
		Let $x\in K_{t^2/\log b}$ and take $\mu\in M_{\times a,\times b}(\T)$ such that $h_{\mu}(T_a)\leq t^2/\log b$ and $\delta_{\times a,\times b,x}^N\ (N\in\N)$ accumulates to $\mu$. We take a divergent subsequence $\left\{N_j\right\}_{j=1}^{\infty}$ in $\N$ such that $\delta_{\times a,\times b,x}^{N_j}\to\mu$ as $j\to\infty$.
		We take $0<\varepsilon<1$. Since $h_{\mu}(T_a,\alpha)=\lim_{M\to\infty}M^{-1}H_{\mu}\left(\alpha_M\right)\leq h_{\mu}(T_a)\leq t^2/\log b$,
		we have
		$$\frac{1}{M}H_{\mu}(\alpha_M)<\frac{t^2}{\log b}+\frac{t\varepsilon}{\log b}$$
		for sufficiently large $M\in\N$. We fix such an $M$.
		
		We write $q(\mu,\alpha_M)=\left(\mu\left(\alpha_{M,1}\right),\dots,\mu\left(\alpha_{M,k_M}\right)\right)$: a $k_M$-distribution and notice that\\
		$H\left(q(\mu,\alpha_M)\right)=H_{\mu}\left(\alpha_M\right)<M(t^2/\log b+t\varepsilon/\log b)$. We take a sufficiently small $\eta>0$ so that, for a $k_M$-distribution $q$, 
		\begin{equation}\label{eta}
			\left|q-q(\mu,\alpha_M)\right|<\eta \quad\text{ implies }\quad H(q)<M\left(\frac{t^2}{\log b}+\frac{t\varepsilon}{\log b}\right),
		\end{equation}
		where $|\cdot|$ denotes the Euclidean norm on $\R^{k_M}$. For each $i=1,\dots,k_M$, we take a compact subset $C_i$ such that $C_i\subset\alpha_{M,i}$ and $\mu\left(\alpha_{M,i}\setminus C_i\right)<\eta/2\sqrt{k_M}k_M$. Then we take an open subset $V_i$ such that $C_i\subset V_i\subset \beta_{M,i}$ and $V_i\ (i=1,\dots,k_M)$ are pairwise disjoint. Since $\delta_{\times a,\times b,x}^{N_j}\to\mu$ as $j\to\infty$ with respect to the weak* topology, we have
		\begin{equation*}
			\delta_{T_a,T_b,x}^{N_j}\left(V_i\right)>\mu(C_i)-\frac{\eta}{2\sqrt{k_M}k_M},\quad i=1,\dots,k_M,
		\end{equation*}
		hence
		\begin{equation*}
			\delta_{T_a,T_b,x}^{N_j}\left(V_i\right)>\mu(\alpha_{M,i})-\frac{\eta}{\sqrt{k_M}k_M},\quad i=1,\dots,k_M
		\end{equation*}
		for suffiently large $j$.
		
		For $j$ as above, we take a $N_j$-choice $\underline{\beta_M}=(\beta_{i_{m,n}})_{0\leq m,n<N}$ for $x$ with respect to $T_a,T_b$ and $\beta_M$ such that $i_{m,n}=i$ whenever $T_a^mT_b^nx\in V_i$. Then, when we write $q(\underline{\beta_M})=\left(q_1,\dots,q_{{k_M}}\right)$, we have
		\begin{equation*}
			q_i\geq\delta_{T_a,T_b,x}^{N_j}(V_i)>\mu(\alpha_{M,i})-\frac{\eta}{\sqrt{k_M}k_M},\quad i=1,\dots,k_M.
		\end{equation*}
		Since $q(\underline{\beta_M})$ and $q(\mu,\alpha_M)=\left(\mu\left(\alpha_{M,1}\right),\dots,\mu\left(\alpha_{M,k_M}\right)\right)$ are $k_M$-distributions, this implies that
		\begin{equation*}
			\left|q_i-\mu(\alpha_{M,i})\right|<\frac{\eta}{\sqrt{k_M}},\quad i=1,\dots,k_M.
		\end{equation*}
		Hence, by (\ref{eta}), we have
		\begin{equation}\label{TaTb}
			H(q(\underline{\beta_M}))<M\left(\frac{t^2}{\log b}+\frac{t\varepsilon}{\log b}\right).
		\end{equation}
		Now, since $0<t< \log b$,
		\begin{align*}
			q(\underline{\beta_M})=&\frac{1}{N_j}\sum_{n=0}^{N_j-1}q(\underline{\beta_M}_n)\\
			&=\frac{\lfloor tN_j/\log b\rfloor+1}{N_j}\left\{\frac{1}{\lfloor tN_j/\log b\rfloor+1}\sum_{0\leq n< tN_j/\log b}q(\underline{\beta_M}_n)\right\}\\
			&\qquad\qquad\qquad+\frac{N_j-\lfloor tN_j/\log b \rfloor -1}{N_j}\left\{\frac{1}{N_j-\lfloor  tN_j/\log b\rfloor-1}\sum_{tN_j/\log b\leq n<N_j}q(\underline{\beta_M}_n)\right\}.
		\end{align*}
		Hence, by the concavity of $H(p)$ in a $k_M$-distribution $p$ and (\ref{TaTb}), we have
		\begin{align*}
			&\frac{t}{\log b}H\left(\frac{1}{\lfloor tN_j/\log b \rfloor+1}\sum_{0\leq n< tN_j/\log b}q(\underline{\beta_M}_n)\right)\\
			\leq&\ \frac{\lfloor tN_j/\log b\rfloor+1}{N_j}H\left(\frac{1}{\lfloor tN_j/\log b\rfloor+1}\sum_{0\leq n<tN_j/\log b}q(\underline{\beta_M}_n)\right)\\
			&\qquad\qquad\qquad+\frac{N_j-\lfloor tN_j/\log b\rfloor -1}{N_j}H\left(\frac{1}{N_j-\lfloor tN_j/\log b\rfloor-1}\sum_{tN_j/\log b\leq n<N_j}q(\underline{\beta_M}_n)\right)\\
			\leq&\ H(q(\underline{\beta_M}))\\
			<&\ M\left(\frac{t^2}{\log b}+\frac{t\varepsilon}{\log b}\right)
		\end{align*}
		and
		\begin{equation*}
			H\left(\frac{1}{\lfloor tN_j/\log b \rfloor+1}\sum_{0\leq n< tN_j/\log b}q(\underline{\beta_M}_n)\right)
			< M(t+\varepsilon).
		\end{equation*}
		Using the concavity of $H(p)$ again, we have
		\begin{equation*}
			H(q(\underline{\beta_M}_n))< M(t+\varepsilon)
		\end{equation*}
		for some $0\leq n<tN_j/\log b$
		\footnote{In the last four expressions, we assume that $tN_j/\log b$ is not an integer. If $tN_j/\log b$ is an integer, then we replace $\lfloor tN_j/\log b\rfloor+1$ to $\lfloor tN_j/\log b\rfloor$ in these expressions and obtain the same conclusion.}.
		Since $q(\underline{\beta_M}_n)\in\Dist_{\beta_M}(T_b^nx,N_j)$, this shows that $x$ satisfies the condition in Lemma \ref{keylemma} for $N_j$ and $M$. Since this is satisfied for infinitely many $N_j\ (j\in\N)$ and sufficiently large $M\in\N$ and for arbitrary $0<\varepsilon<1$, we have $x\in Q\left(t,\left\{\beta_M\right\}_{M\in\N}\right)$. Then we have $K_{t^2/\log b}\subset Q\left(t,\left\{\beta_M\right\}_{M\in\N}\right)$ and complete the proof.
	\end{proof}
	
	\section*{Acknowledgements}
	The author is grateful to Masayuki Asaoka for his useful comments. He also thanks to Mitsuhiro Shishikura for his helpful advice. This work is supported by JST SPRING, Grant Number JPMJSP2110.

\end{document}